\newcommand{\si}{\sigma}
\newcommand{\al}{\alpha}
\newcommand{\de}{\delta}
\newcommand{\ve}{\varepsilon}
\newcommand{\te}{\theta}
\newcommand{\ka}{\kappa}
\newcommand{\la}{\lambda}
\newcommand{\vp}{\varphi}
\newcommand{\Ga}{\Gamma}
\newcommand{\De}{\Delta}
\newcommand{\Om}{\Omega}
\newcommand{\tl}{\tilde}
\newcommand{\pa}{\partial}
\newcommand{\RR}{\mathbb{R}}
\newcommand{\BS}{\mathbb{S}}
\newcommand{\ii}{\mathrm{i}}
\newcommand{\mc}{\mathcal}
\newcommand{\ms}{\mathscr}
\newtheorem{thm}{Theorem}[section]
\theoremstyle{definition}
\numberwithin{equation}{section}
\title{\bf Mathematical Design of A Novel Gesture-based Instruction/Input Device Using Wave Detection }
\author{ Hongyu Liu\thanks{Department of Mathematics, Hong Kong Baptist University, Kowloon Tong, Hong Kong SAR, and HKBU Institute of Research and Continuing Education, Virtual University Park, Shenzhen, P. R. China. Email:  {\tt hongyu.liuip@gmail.com}}
\and Yuliang Wang\thanks{Department of Mathematics, Hong Kong Baptist University, Kowloon Tong, Hong Kong SAR. Email: {\tt yuliang@hkbu.edu.hk}} \and Can Yang\thanks{Department of Mathematics, Hong Kong Baptist University, Kowloon Tong, Hong Kong SAR. Email: {\tt eeyang@hkbu.edu.hk}}}
\date{} 
\begin{document}
\maketitle

\begin{abstract}

n this paper, we present a conceptual design of a novel gesture-based instruction/input device using wave detection. The device recogonizes/detects gestures from a person and based on which to give the specific orders/inputs to the computing machine that is connected to it. The gestures are modelled as the shapes of some impenetrable or penetrable scatterers from a certain admissible class, called a {\it dictionary}. The device generates time-harmonic point signals for the gesture recognition/detection. It then collects the scattered wave in a relatively small backscattering aperture on a bounded surface containing the point sources. The recognition algorithm consists of two steps and requires only two incident waves of different wavenumbers. The approximate location of the scatterer is first determined by using the measured data at a small wavenumber and the shape of the scatterer is then identified using the computed location of the scatterer and the measured data at a regular wavenumber. We provide the mathematical principle with rigorous justifications underlying the design. Numerical experiments show that the proposed device works effectively and efficiently in some practical scenarios.

\medskip

\medskip

\noindent{\bf Keywords:}~~Gesture recognition, instruction/input device, wave propagation, inverse scattering

\noindent{\bf 2010 Mathematics Subject Classification:}~~35R30, 35P25, 78A46

\end{abstract}

\section{Introduction}\label{sect:1}

The technology of gesture computing enables humans to communicate with the machine and interact naturally without any mechanical devices; see Fig.~1 for a schematic illustration. 
\begin{figure}[t]
\centering
\includegraphics[width=10cm]{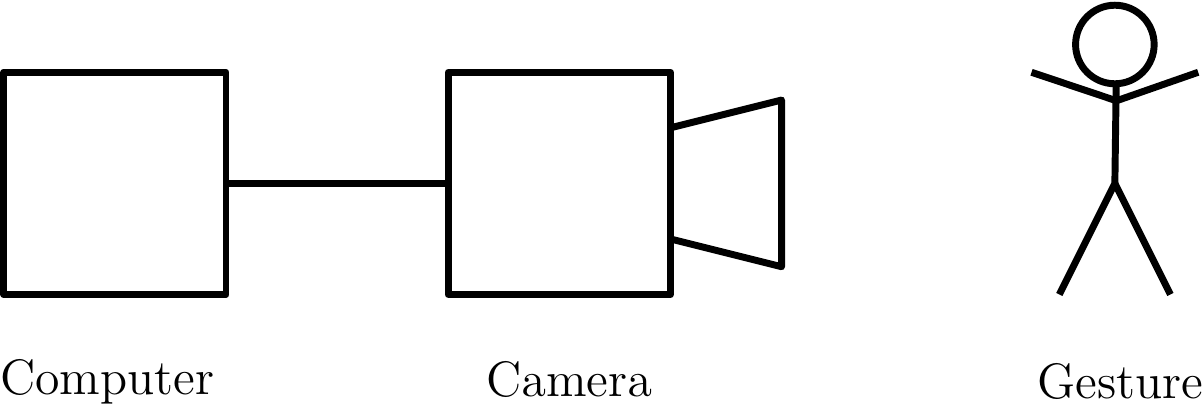}
\caption{Schematic illustration of a traditional gesture recognition device using cameras. }
\end{figure}
Typically, a gesture-based computing technology consists of three major ingredients: the computing machine, the recognition device and the human being who gives orders/instructions to the computer. The recognition device understands human body language, say the hand gesture, and interprets it as specific orders/instructions for the computing machine. Hence, it acts as a bridge between the computing machine and the human being. Using the concept of gesture recognition, it is possible to wave the hand to start the computer and point a finger at the computer screen so that the cursor will move accordingly. This would enrich the communication interfaces between machines and humans other than the conventional text user interfaces or graphical user interfaces, which still limit the majority of input to keyboard and mouse. We refer to \cite{PSH,STTC,EBNBT} for reviews on the state-of-the-art development on gesture computing technology, as well as the interesting Wikipedia article \cite{Wiki} for a comprehensive introduction.   

According to our discussion above, the recognition device plays the key role for a successful implementation of the technology. In nowaday technology, one usually utilizes cameras to capture the images of a person's movements and then the gesture recognition can be conducted with techniques from computer vision and imaging processing. Though there is a large amount of research done in image/video based gesture recognition, there are still many challenges to make the technology more practically useful. In this paper, we present a conceptual design of a novel gesture recognition device, which is different to the conventional image/video-based one. In our design, we use wave signals for the gesture detection and recognition; see Fig.~2 for a schematic illustration of the novel gesture recognition device. The gestures are modelled as shapes of some penetrable or impenetrable scatterers. In order to identity the shapes (and thus the gestures), a transmitter will send out wave signals. The propagation of the  wave field will be interrupted when meeting with the human body performing the gesture, and this generates the so-called scattered wave. Then some receivers will collect the scattered data and from which to identify the shapes of the scattering objects, namely the gestures. We are aware of some very recent engineering development by Google in Project Soli of using radar to identify hand gestures, along with some very interesting technological applications; see \cite{google}.       
\begin{figure}[t]
\centering
\includegraphics[width=10cm]{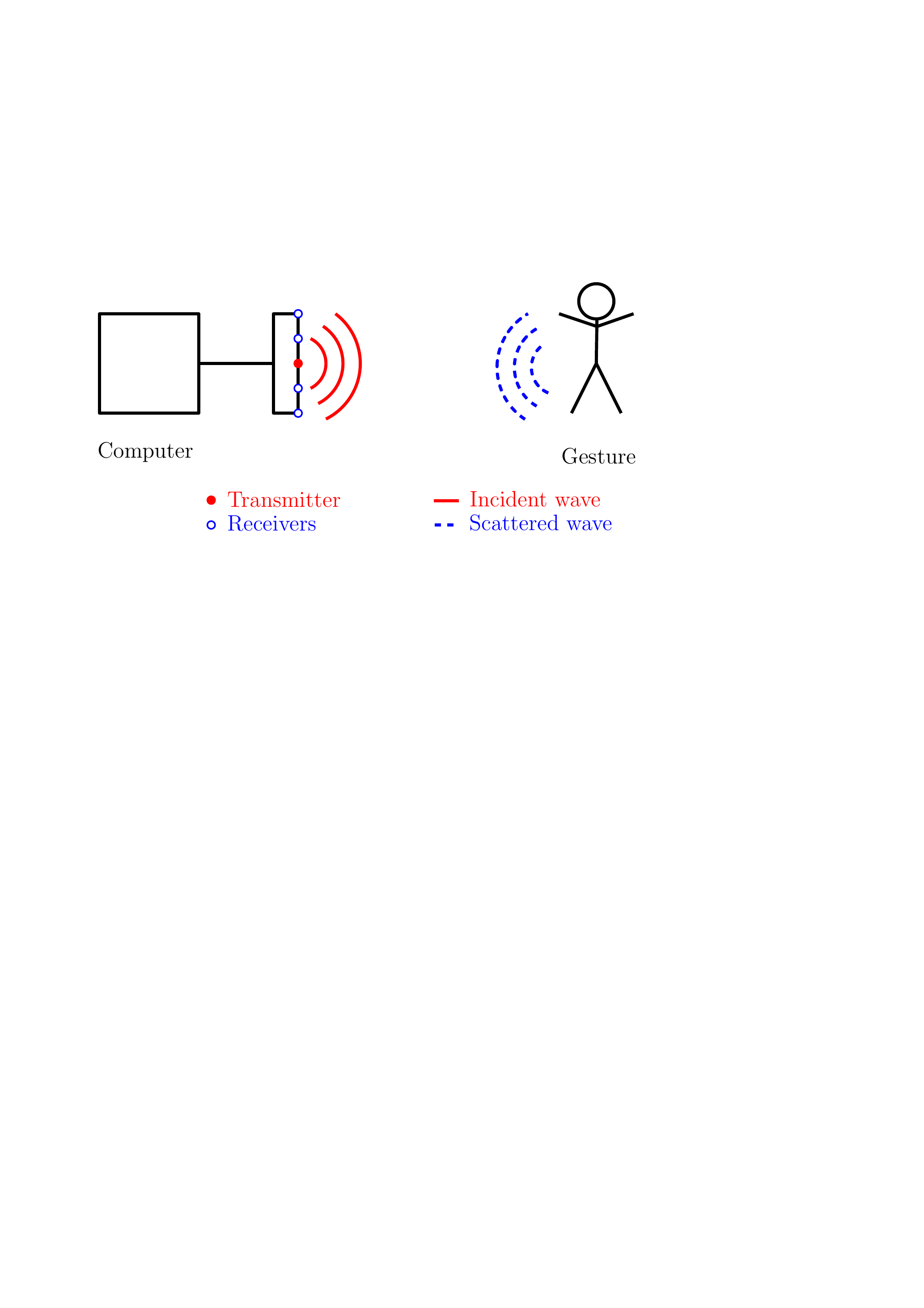}
\caption{Schematic illustration of the novel gesture recognition device using wave scattering. }
\end{figure}

In order to motivate the setup of our novel design, we take a practical scenario as an illustrative example for our subsequent exposition. Let us assume that the gesture recognition device is installed behind the screen of a personal desktop computer, which takes orders/instructions from gestures performed by a person in front of the computer. As mentioned earlier, there are transmitters sending the wave signals and receivers collecting the scattered waves for the gesture recognition. In our design, we only use a single transmitter which generates point wave signals. There is an array of receivers distributed on a bounded surface containing the transmitter. First, we emphasize that from a practical viewpoint, the aperture of the receivers cannot be very large. Second, in order to single out the wave signals for the gesture recognition from possible noises, we shall make use of time-harmonic waves with specified frequencies, namely wavenumbers. Finally, the recognition must be conducted in a timely manner. Indeed, we take account of all these practical factors into our study. We shall make use of two time-harmonic point sources with, respectively, a low wavenumber and a regular wavenumber (in terms of the size of the human body). The backscattering data of a small aperture are measured on a bounded surface containing the point sources. In fact, we can even make use of the phaseless data. The recognition algorithm consists of two steps. In the first step, we determine the location of the scatterer by using the measured data at a small wavenumber. In the second step, with the information of the location of the scatterer, we then identify the shape of the scatterer using the measured data at a regular wavenumber. Our proposed recognition method is computationally very fast, and it is of a totally ``direct" nature without any inversion involved. Hence, our design nicely addresses all the practical concerns mentioned earlier. 

It is pointed out that the mathematical principle of our novel design is a typical inverse scattering problem, where one sends wave fields and collects the scattered wave data, and finally uses the data to identify the unknown scatterer. The study on inverse scattering problems is central to many areas of science and technology, including radar/sonar, geophysical exploration and medical imaging. We refer to \cite{BK12,CK,KG,U} and references therein for related studies on inverse scattering problems. Nevertheless, we would like to emphasize that our study is new to the literature and would meet significant challenges even in the setting of inverse scattering study. As discussed earlier, we shall make use of only two point sources for the identification. The shape identification using a minimum number of scattering measurements is known as the Schiffer's problem (cf. \cite{CK}), and remains to be a longstanding open problem in the literature. There is some significant progress on the Schiffer's problem in determining general polyhedral scatterers \cite{AR,CY,HuLiu,LiuZou1}, and in approximately determining general scatterers \cite{LPRX,Ron1}. Another challenging issue in our study is the phaseless measurement data in a limited aperture. The inverse scattering problems with phaseless data are notoriously difficult in the literature and we refer to \cite{Kli1,Kli2} for some recent progress. A key assumption in our study which can alleviate the mathematical challenges that we are confronting with for the novel gesture recognition device is that the shapes are a priori known to be from an admissible class, called a {\it dictionary}. This is a practically reasonable assumption since one can calibrate the gesture recognition device beforehand by collecting the scattering information of the admissible gestures. It is noted that there are some existing studies in recovering scatterers from an admissible class or dictionary; see \cite{Amm0,Amm1,Amm3,LLZ1}. Among others, one distinctive novelty of our this study lies at the very little scattering information that we use for the identification.

The rest of the paper is organized as follows. In Section 2, we present the mathematical principle study of  the novel gesture recognition device. In Section 3, we conduct extensive numerical experiments to show and verify the effectiveness and efficiency of the proposed recognition algorithm.

\section{Mathematical principle study}

In this section, we present the mathematical modelling and the recognition algorithm for the proposed gesture-based computing device. Following our discussion in Section~\ref{sect:1},  we let
\begin{align}
  u_\ka^{\rm in} (x, t) = \Phi_\ka(x,0)e^{-\mathrm{i}\kappa t}, \quad (x, t)\in\mathbb{R}^3\backslash\{0\}\times\mathbb{R}_+,
\end{align}
be a time-harmonic point source with a wavenumber $\ka$ located at the origin, where
\begin{align*}
\Phi_\ka(x,y) = \frac{e^{\ii \ka \|x-y\|}}{4\pi\|x-y\|}, \ \ x, y\in\mathbb{R}^3\ \ \mbox{and}\ \ x\neq y,
\end{align*}
is the fundamental solution of the PDO, $-\Delta-\ka^2$; namely $(-\De_x - \ka^2)\Phi_\ka(x,y) = \de(x,y)$. By factoring out the time-oscillating part, in what follows, we simply write 
\begin{align}
  \label{eq:2}
  u_\ka^{\rm in} (x) = \Phi_\ka(x,0), \quad x\in\mathbb{R}^3\backslash\{0\},
\end{align}
to denote the time-harmonic point signal located at the origin. 

We model the shape of the body of the person who performs the gestures as a bounded Lipschitz domain $\Omega$. It is assumed that $\Omega$ possesses a connected complement,  $\Om^{\rm ex}=\RR^3 \setminus \bar{\Om}$. Furthermore, as discussed earlier in Section~\ref{sect:1} that the gesture recognition device can be calibrated beforehand, we assume that there exists an {\em admissible class} or a {\em dictionary} of Lipschitz domains,
\begin{equation}\label{eq:dictionary}
\ms{A}=\{D_j\}_{j=1}^N,\ \ N\in\mathbb{N},
\end{equation}
where each $D_j$ is simply connected and contains the origin, such that
\begin{equation}\label{eq:scatterer11}
 \Om = D+z: = \{x+z; x \in D\},\qquad \mbox{where}\ \ D\in\ms{A}\quad\mbox{and}\quad z\in\mathbb{R}^3. 
\end{equation}
It is also assumed that 
\begin{equation}\label{eq:assump1}
\|D_j\| := \max_{x \in D} \|x\| \simeq 1,\quad 1\leq j\leq N,
\end{equation}
and
\begin{equation}\label{eq:assump2}
\|z\|\gg 1,
\end{equation}
where $z\in\mathbb{R}^3$ is the location of $\Omega$ in \eqref{eq:scatterer11}. The assumption \eqref{eq:assump1} means that the Euclidean size of the scatterer $\Omega$ can be calibrated so that we could choose the low frequency in the sense that $2\pi/\ka\gg\|\Omega\|:=\|D\|$ or the regular frequency in the sense that $2\pi/\ka\simeq \|\Omega\|$, of the point source \eqref{eq:2} for the gesture recognition. The assumption \eqref{eq:assump2} means that the person performing the gesture instructions should stay away from the device of a sufficiently large distance. However, it is emphasized that this condition is mainly needed in our theoretical justification of the gesture recognition algorithm in what follows. Indeed, in our numerical experiments, it can be seen that as long as the scatterer $\Omega$ is located away from the point source of a reasonable distance, then the recognition method can work effectively and efficiently to recover the scatterer.

Due to the presence of the scatterer $\Omega$, the propagation of the point wave \eqref{eq:2} will be interrupted, leading to the so-called {\it scattering}. Denote by $u_\ka^\Om(x), x \in \Om^{\rm ex}$ the interrupted/scattered wave field associated with the scatterer $\Om$ and the point source $u_\ka^{\rm in}(x)$ in \eqref{eq:2}. Let $\Ga$ denote a bounded surface containing the origin such that $dist(\Om, \Ga) \gg 1$, where $dist(\Om,\Ga) = \inf_{x \in \Om, y \in \Ga} \|x-y\|$
denotes the distance between $\Om$ and $\Ga$. In our gesture recognition study, $\Gamma$ denotes the measurement surface such that one measures the scattered wave field $u_\ka^\Om|_\Ga$ and from which to recover the shape of the scatterer $\Omega$; that is, $\partial\Omega=z+\partial D$. For a timely recognition, we shall only makes use of two point waves of the form \eqref{eq:2} for two values of $\ka \lesssim 1$. Moreover, from a practical point of view, we shall mainly consider two cases that $\Omega$ is impenetrable being soft or $\Omega$ is penetrable being a medium scatterer. The case that $\Omega$ is soft corresponds to the case that the person wears a certain equipment which prevents the wave field from penetrating inside the body; and the case that $\Omega$ is a medium scatterer correspond to the generic situation where the wave field can pass through the body.   

\subsection{Impenetrable soft scatterer}\label{sect:soundsoft}

We first consider the case where $\Om$ is soft, namely $\Om$ is impenetrable to the wave and there holds the homogeneous boundary condition $u_\ka^{\rm in} + u_\ka^\Om = 0$ on $\pa \Om$. In the frequency domain, the wave scattering is governed by the following PDE system for $u_\ka^\Om$,
\begin{align}
  \label{eq:4}
  \begin{cases}
    (\De + \ka^2) u_\ka^\Om = 0 \quad {\rm in}~\Om^{\rm ex}, \\[5pt]
    u_\ka^\Om = - u_\ka^{\rm in} \quad {\rm on}~\pa \Om^{\rm ex}, \\[5pt]
    \displaystyle{\lim_{r \to \infty} r \left( \frac{\pa}{\pa r} - \ii \ka \right) u_\ka^\Om = 0,}
  \end{cases}
\end{align}
where $r = \|x\|$ and the limit holds uniformly for all $\hat{x}:=x/\|x\| \in \BS^2$, where $\BS^2$ denotes the unit sphere in $\RR^3$. The last limit in \eqref{eq:4} is known as the Sommerfeld tradition condition, which characterizes the decaying property of the scattered wave field away from the scatterer. The scattering system \eqref{eq:4} is well understood \cite{CK,Mcl,Ned}, and there exists a unique solution $u_\ka^\Om\in H^1_{\rm loc}(\mathbb{R}^3\backslash\overline{\Omega})$ which possesses the following asymptotic expansion as $\|x\|\rightarrow+\infty$,
\begin{equation}\label{eq:ffp}
u_\ka^\Om(x)=\frac{e^{\ii\ka\|x\|}}{\|x\|}u_\ka^\infty(\hat x)+\mathcal{O}\left(\frac{1}{\|x\|^2}\right),
\end{equation}
where $u_\ka^\infty$ is known as the far-field pattern for $u_\ka^\Om$, and $\hat x$ is called the observation direction. 

The inverse scattering problem concerning the gesture recognition is to identify $\partial\Omega$ from the measurement of the scattered field on $\Gamma$ due to $\Om$. Noting that $\partial\Omega=z+\partial D$, where $z$ is the location of the scatterer and the shape $D$ is from the dictionary. For the recovery of $\partial \Omega$, it would be advantageous of decoupling the information on $z$ and $D$ in the scattered wave field $u_\ka^\Om|_\Gamma$. This can be done if the incident wave field is a plane wave of the form $e^{\ii\ka d\cdot x}$ with $d\in\mathbb{S}^2$ denoting the incident direction, and one can make use of a certain translation relation; see \cite{LLZ1}. For the current study with the incident wave being a point signal \eqref{eq:2}, our first step is derive a similar relation in order to decouple the information of $z$ and $D$ in the scattered wave field that is measured on $\Gamma$. The main idea arises from the crucial observation that $u^{\rm in}(x)$ near $\Om$ can be approximated by a plane wave propagating in the $\hat{z}$-direction. This is also the reason why we need to assume that the scatterer is located at a reasonably large distance away from the origin which is the location of both the point source and the measurement surface. Nevertheless, we would like to emphasize again that this assumption is mainly required in our theoretical justification, and our numerical experiments show that even without this assumption, the proposed recognition algorithm still works effectively and efficiently.

In what follows, if the incident wave $u_\ka^{\rm in}$ in the scattering system \eqref{eq:4} is replaced by a plane wave $e^{\ii\ka d\cdot x}$, then we write $w_\ka(D, d; x)$, $x\in D^{\rm ex}$, to denote the corresponding scattered wave field, and $w_\ka^\infty(D, d; \hat x)$, $\hat x\in\mathbb{S}^2$, to denote the associated far-field pattern. Then we have the following crucial theorem for the subsequent recovery use. 

\medskip

\begin{thm} \label{thm:point-plane}
Let $\ka\in\mathbb{R}_+$ be fixed. Let $w_\ka(D,\hat{z};x), x \in D^{\rm ex}$ and $w^\infty_\ka(D,\hat{z}; \hat{x}), \hat{x} \in \BS^2$ be, respectively, the scattered field and the far-field pattern corresponding to the sound-soft scatterer $D$ and the plane incident field $w_\ka^{\rm in}(\hat{z};x)=e^{\ii \ka \hat{z} \cdot x}$. Let $u_\ka^\Om$ be the scattered wave field defined in \eqref{eq:4}. Then there holds the following asymptotic expansion,
\begin{equation} \label{eq:23a}
u_\ka^\Om(x)= \frac{e^{\ii \ka \|z\|}}{4 \pi \|z\|} \frac{e^{\ii \ka \|x-z\|}}{\|x-z\|} \left[ w_\ka^\infty(D,\hat{z};\widehat{x-z}) + \mc{O}(\|z\|^{-1}) \right] \left[ 1 + \mc{O}(\|z\|^{-1}) \right]
\end{equation}
as $\|z\| \to \infty$ uniformly for all $\hat{z} \in \BS^2$ and $x \in \Ga$.
\end{thm}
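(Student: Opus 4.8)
The plan is to reduce the point-source scattering problem for $\Om=D+z$ to the plane-wave scattering problem for $D$ by exploiting the fact that, near $\Om$, the incident point signal $u_\ka^{\rm in}$ is a slowly modulated plane wave travelling in the direction $\hat z$, and then to read off the field on $\Ga$ through the far-field expansion \eqref{eq:ffp} of the plane-wave scattered field evaluated at the large argument $x-z$.

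First I would expand the incident field near the scatterer. Writing a generic point of $\Om$ as $\xi+z$ with $\xi$ ranging over $\bar D$ (so that $\|\xi\|\simeq 1$ by \eqref{eq:assump1}), the binomial expansion of $\|\xi+z\|=\|z\|(1+2\hat z\cdot\xi/\|z\|+\|\xi\|^2/\|z\|^2)^{1/2}$ gives $\|\xi+z\|=\|z\|+\hat z\cdot\xi+\mc{O}(\|z\|^{-1})$ uniformly for $\xi\in\bar D$ and $\hat z\in\BS^2$. Substituting into $\Phi_\ka(\xi+z,0)$ and using $\|z\|/\|\xi+z\|=1+\mc{O}(\|z\|^{-1})$ yields the key identity
\[
  u_\ka^{\rm in}(\xi+z)=\frac{e^{\ii\ka\|z\|}}{4\pi\|z\|}\,e^{\ii\ka\hat z\cdot\xi}\bigl[1+\mc{O}(\|z\|^{-1})\bigr],
\]
that is, on $\pa\Om$ the incident point source equals the normalising constant $e^{\ii\ka\|z\|}/(4\pi\|z\|)$ times the plane wave $w_\ka^{\rm in}(\hat z;\xi)=e^{\ii\ka\hat z\cdot\xi}$, up to a multiplicative $\mc{O}(\|z\|^{-1})$ correction.

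Next I would pass from boundary data to the scattered field. Introduce the shifted field $v(\xi):=u_\ka^\Om(\xi+z)$, which solves the exterior Dirichlet problem for the fixed obstacle $D$ with boundary value $-u_\ka^{\rm in}(\xi+z)$ on $\pa D$ and the Sommerfeld condition, both preserved under translation. The rescaled plane-wave solution $\tfrac{e^{\ii\ka\|z\|}}{4\pi\|z\|}w_\ka(D,\hat z;\cdot)$ solves the same exterior problem with boundary value $-\tfrac{e^{\ii\ka\|z\|}}{4\pi\|z\|}e^{\ii\ka\hat z\cdot\xi}$. Subtracting and invoking well-posedness of the exterior Dirichlet problem \cite{CK,Mcl,Ned} — specifically, the boundedness of the data-to-far-field map and the real-analyticity of far-field patterns — the far-field discrepancy is controlled, pointwise in the observation direction, by the $H^{1/2}(\pa D)$-norm of the boundary-data discrepancy, which by the previous step is $\mc{O}(\|z\|^{-1})$ relative to the normalising constant. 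Hence the far-field pattern $v^\infty$ of $v$ satisfies $v^\infty(\hat\xi)=\tfrac{e^{\ii\ka\|z\|}}{4\pi\|z\|}\bigl[w_\ka^\infty(D,\hat z;\hat\xi)+\mc{O}(\|z\|^{-1})\bigr]$. Finally, applying the far-field expansion \eqref{eq:ffp} to $v$ at the argument $\xi=x-z$ — whose length $\|x-z\|$ tends to $\infty$ with $\|z\|$ since $\Ga$ is bounded — and substituting $v(x-z)=u_\ka^\Om(x)$ and $\hat\xi=\widehat{x-z}$ produces \eqref{eq:23a}, the two bracketed $\mc{O}(\|z\|^{-1})$ factors collecting, respectively, the shape/far-field error and the point-source amplitude-phase correction.

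The main obstacle, and the step deserving the most care, is making all the error estimates uniform in $\hat z\in\BS^2$ and $x\in\Ga$ while simultaneously letting $\|z\|\to\infty$. Two distance scales are entangled: the far-field expansion \eqref{eq:ffp} is an asymptotic as the observation distance diverges, yet here that distance $\|x-z\|$ diverges precisely because $\|z\|\to\infty$, so I must verify that the remainder in \eqref{eq:ffp} is uniform over the compact set of directions $\{\widehat{x-z}:x\in\Ga\}$ and does not degrade as $\hat z$ varies. Uniformity in $\hat z$ of the data-to-far-field estimate follows from the smooth dependence of $w_\ka(D,\hat z;\cdot)$ on the incident direction together with compactness of $\BS^2$; uniformity over the finite dictionary $\ms{A}$ is then automatic. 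Once these uniformities are secured, the bookkeeping that merges the several $\mc{O}(\|z\|^{-1})$ contributions into the stated two-factor form \eqref{eq:23a} is routine.
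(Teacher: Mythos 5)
Your proposal is correct and follows the same skeleton as the paper's proof --- the expansion $u_\ka^{\rm in}(\xi+z)=\frac{e^{\ii\ka\|z\|}}{4\pi\|z\|}e^{\ii\ka\hat z\cdot\xi}\bigl[1+\mc{O}(\|z\|^{-1})\bigr]$ on $\pa D$, translation of the exterior problem to the reference domain $D$, a bounded solution operator transferring the $\mc{O}(\|z\|^{-1})$ boundary discrepancy to the scattered field, and finally the far-field expansion at the large argument $x-z$ --- but you implement the middle step differently. The paper constructs the solution operator explicitly via the Brakhage--Werner combined layer potential, writing both $u_\ka^\Om(\cdot+z)$ and $w_\ka(D,\hat z;\cdot)$ as $-\left(\mc{K}_\ka^D+\ii\eta\mc{S}_\ka^D\right)\left(\frac{I}{2}+\mc{K}_\ka^D+\ii\eta\mc{S}_\ka^D\right)^{-1}$ applied to the respective boundary data; this uses the classical jump relations under the assumption $\pa D\in C^2$, and the paper then remarks separately on the Lipschitz case (handled via the double-layer potential only when $\ka^2$ is not an interior Dirichlet eigenvalue, with the remaining case explicitly deferred). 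You instead invoke the abstract well-posedness of the exterior Dirichlet problem and the boundedness of the boundary-data-to-(far-)field map on $H^{1/2}(\pa D)$, which holds for Lipschitz $\pa D$ and every $\ka>0$ irrespective of interior eigenvalues; your route therefore covers uniformly the cases the paper treats only partially, at the cost of citing the well-posedness theory as a black box rather than exhibiting the operator. Your explicit attention to uniformity --- of the data-to-far-field estimate in $\hat z\in\BS^2$ by compactness and continuous dependence, and of the far-field remainder over the directions $\{\widehat{x-z}:x\in\Ga\}$ as $\|x-z\|\to\infty$ together with $\|z\|$ --- addresses a point the paper passes over silently when it inserts $w_\ka(D,\hat{z};x-z) = \frac{e^{\ii \ka \|x-z\|}}{\|x-z\|}\bigl[w_\ka^\infty(D,\hat{z};\widehat{x-z}) + \mc{O}(\|z\|^{-1})\bigr]$ without comment; the cleanest bookkeeping is, as you suggest, to split the shifted field into the rescaled plane-wave solution plus a radiating remainder with $\mc{O}(\|z\|^{-1})$ boundary data and expand each separately.
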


\medskip

\begin{proof}
We first assume that $\partial D$ is $C^2$-continuous and we can make use of the Brakhage-Werner trick for our proof (cf. \cite{CK,Ned}). In this case, it is known that $u_\ka^\Om\in C^2(\Om^{\rm ex})\cap C(\overline{\Om^{\rm ex}})$. We seek the solution to \eqref{eq:4} in the form of combined layer potentials,
\begin{align}
  \label{eq:5}
  u_\ka^\Om(x) = \left( \mc{K}_\ka^\Om + \ii \eta \mc{S}_\ka^\Om \right) [\vp_\ka^\Om] (x), \quad x \in \Om^{\rm ex},
\end{align}
where $\vp_\ka^\Om\in C(\partial \Om)$ and the double-layer potential,
\begin{align*}
  \mc{K}_\ka^\Om[\vp_\ka^\Om](x) := \int_{\pa \Om} \frac{\pa \Phi_\ka(x,y)}{\pa \nu(y)} \vp_\ka^\Om(y) \, {\rm d}s(y),
\end{align*}
and the single-layer potential,
\begin{align*}
  \mc{S}_\ka^\Om[\vp_\ka^\Om](x) := \int_{\pa \Om} \Phi_\ka(x,y) \vp_\ka^\Om(y) \, {\rm d}s(y),
\end{align*}
and $\eta \neq 0$ is a real-valued coupling parameter. By letting $x \to \pa\Om^+$ and using the jump relations for the layer potential operators (cf. \cite{CK,Ned}), together with the boundary condition in \eqref{eq:4}, we obtain the following integral equation for $\vp_\ka^\Om\in C(\partial\Omega)$,
\begin{align*}
  \left( \frac{I}{2} + \mc{K}_\ka^\Om + \ii \eta \mc{S}_\ka^\Om \right) [\vp_\ka^\Om](x) = - u^{\rm in}(x), \quad x \in \pa\Om.
\end{align*}

Using change of variables, we can verify from \eqref{eq:5} that
\begin{align}
  \label{eq:23}
  u_\ka^\Om(x+z) =  \left( \mc{K}_\ka^D + \ii \eta \mc{S}_\ka^D \right) [\vp_\ka^D] (x), \quad x \in D^{\rm ex},
\end{align}
where $\vp_\ka^D(y) = \vp_\ka^\Om(y+z), y \in \pa D$. Applying the jump relations yields
\begin{align}
  \label{eq:25}
    \left( \frac{I}{2} + \mc{K}_\ka^D + \ii \eta \mc{S}_\ka^D \right) [\vp_\ka^D](x) = - u^{\rm in}(x+z), \quad x \in \pa D.
\end{align}
Combining \eqref{eq:23} and \eqref{eq:25} we obtain
\begin{align}
  \label{eq:27}
  u_\ka^\Om(x+z) = - \left( \mc{K}_\ka^D + \ii \eta \mc{S}_\ka^D \right) \left( \frac{I}{2} + \mc{K}_\ka^D + \ii \eta \mc{S}_\ka^D \right)^{-1} u^{\rm in}(x+z), \quad x \in D^{\rm ex}.
\end{align}
Using the asymptotic expansion
\begin{align*}
 \|x+z\| = \|z\| + \hat{z} \cdot x + \mc{O}(\|z\|^{-1}), \quad x \in \pa D,
\end{align*}
which holds uniformly for $\hat{z} \in \BS^2$ as $\|z\| \to \infty$, we can derive
\begin{align}
  \label{eq:26}
  u^{\rm in}(x+z) = \frac{e^{\ii \ka \|z\|}}{4 \pi \|z\|} w_\ka^{\rm in}(\hat{z};x) \left[ 1 + \mc{O}(\|z\|^{-1}) \right], \quad x \in \pa 
D.
\end{align}
Plugging \eqref{eq:26} into \eqref{eq:27}, noting that $\left( \mc{K}_\ka^D + \ii \eta \mc{S}_\ka^D \right) \left( \frac{1}{2} + \mc{K}_\ka^D + \ii \eta \mc{S}_\ka^D \right)^{-1}$ is bounded and
\begin{align*}
w_\ka(D,\hat{z};x) = - \left( \mc{K}_\ka^D + \ii \eta \mc{S}_\ka^D \right) \left( \frac{I}{2} + \mc{K}_\ka^D + \ii \eta \mc{S}_\ka^D \right)^{-1} w_\ka^{\rm in}(\hat{z};x), \quad x \in D^{\rm ex},
\end{align*}
we obtain
\begin{align*}
  u_\ka^\Om(x+z) = \frac{e^{\ii \ka \|z\|}}{4 \pi \|z\|} w_\ka(D,\hat{z};x) \left[ 1 + \mc{O}(\|z\|^{-1}) \right], \quad x \in D^{\rm ex}.
\end{align*}
Using change of variables yields
\begin{align}
  \label{eq:39}
    u_\ka^\Om(x) = \frac{e^{\ii \ka \|z\|}}{4 \pi \|z\|} w_\ka(D,\hat{z};x-z) \left[ 1 + \mc{O}(\|z\|^{-1}) \right], \quad x \in \Om^{\rm ex}.
\end{align}
Plugging the far-field expansion
\begin{align*}
  w_\ka(D,\hat{z};x-z) = \frac{e^{\ii \ka \|x-z\|}}{\|x-z\|} \left[ w_\ka^\infty(D,\hat{z};\widehat{x-z}) + \mc{O}(\|z\|^{-1}) \right] \quad \mbox{as}\ \ \|x-z\| \to \infty,
\end{align*}
into \eqref{eq:39} we finally obtain \eqref{eq:23a}.

It is pointed out that the Brakhage-Werner trick of using combined layer potentials to represent the wave solution is to avoid the interior eigenvalue problem. That is, if $\eta\equiv 0$ and $\ka^2$ is a Dirichlet eigenvalue to $-\Delta$ in $D$, then the integral operator $\frac{I}{2} + \mc{K}_\ka^D + \ii \eta \mc{S}_\ka^D$ in \eqref{eq:27} is no longer invertible. Hence, if one assumes that $\ka^2$ is not a Dirichlet Laplacian eigenvalue to $D$, then one can simply use the double-layer potential to represent the solution     $u_\ka^\Omega$ in \eqref{eq:5}. Particularly, in such a case, by using the mapping properties of the double layer potential operator when $D$ is a Lipschitz domain in \cite{Mcl}, one can follow a completely similar argument as above to show that the theorem holds when $\partial D$ is only Lipschitz-continuous. Then case that $\partial D$ is Lipschitz-continuous and also $\ka^2$ is a Dirichlet Laplacian eigenvalue would need more technical argument by following the techniques in \cite{Mcl}. We shall not give a complete treatment to the last case and instead we shall focus on our study of the gesture recognition. 

The proof is complete. 
\end{proof}

\subsection{Penetrable medium scatterer}\label{sect:medium}

Next we consider the case when the target object $\Omega$ is a penetrable medium scatterer. Let $n_\Omega\in L^\infty(\mathbb{R}^3)$ be a real-valued function such that $supp(n_\Omega-1)\subset\Om$. $m_\Om(x):=n_\Omega(x)-1$, $x\in\Omega$, denotes the refractive index of the medium inside the body $\Omega$. In a similar manner, we let $n_D$ and $m_D$ signify the refractive index functions of the reference scatterer $D$. Recalling that $\Omega=D+z$, one clearly has the following relation,
\[
n_\Omega(x)=n_D(x-z)\ \ \mbox{for}\ \ x\in\mathbb{R}^3. 
\] 

Similar to Theorem \ref{thm:point-plane}, we let $w_\ka^{\rm in}(\hat{z};x) = e^{\ii \ka \hat{z} \cdot x}$ be a plane incident wave and $w_\ka(D,\hat{z};x)$ be the scattered wave due to $D$ and $w_\ka^{\rm in}(\hat{z};x)$. Set
\[
w_\ka^{\rm t}(D,\hat{z}; x):=w_\ka^{\rm in}(\hat{z};x)+w_\ka(D,\hat{z};x),\quad x\in\mathbb{R}^3.
\]
Then the total field $w_\ka^{\rm t}(D,\hat{z}; \cdot)$ satisfies the equation
\begin{align}\label{eq:medsca1}
  (\De + \ka^2 n_D) w_\ka^{\rm t} = 0 \quad {\rm in}~\RR^3.
\end{align}
Equation \eqref{eq:medsca1} together with the Sommerfeld radiation condition on $w_\ka(D,\hat{z};x)$ governs the wave scattering corresponding to the reference medium scatterer $(D, n_D)$ due to time-harmonic plane wave incidence. It follows from the Lippmann-Schwinger equation \cite{CK} that
\begin{align}
  \label{eq:29}
  w_\ka(D,\hat{z};x) = \ka^2 \int_D \Phi_\ka(x,y) m_D(y) w_\ka^{\rm t}(D,\hat{z};y) \, {\rm d}y, \quad x \in \RR^3
\end{align}
where $m_D=n_D-1$ is supported in $D$.

Let $u_\ka^{\rm in}(x)$ be the point-source incident field given by \eqref{eq:2} and $u_\ka^{\rm t}=u_\ka^{\rm in} + u_\ka^\Om$ be the total field due to $(\Om, n_\Omega)$ and $u_\ka^{\rm in}$. Then we have
\begin{align*}
  (\De + \ka^2 n_\Om)u_\ka^{\rm t}(x) = - \de(x),
\end{align*}
where $n_\Om(x)=n_D(x-z)$ and $\de$ denotes the Dirac delta function. It is easy to verify
\begin{align}
  \label{eq:10}
  (\De + \ka^2) u_\ka^\Om(x) =
  \begin{cases}
    0, & x \in \Om^{\rm ex}, \\
    -\ka^2 m_\Om(x) u_\ka^{\rm t}(x), & x \in \Om,
  \end{cases}
\end{align}
where $m_\Om=n_\Om-1$. For any fixed $x \in \RR^3$ let $B$ be a ball centered at the origion such that $B \supset \{x\} \cup \Om$. It follows from Green's formula \cite{CK,Ned} that
\begin{align}
  \label{eq:18}
  u_\ka^\Om(x) &= \int_{\pa B} \frac{\pa u_\ka^\Om(y)}{\pa \nu} \Phi_\ka(x,y) - u_\ka^\Om(y) \frac{\pa \Phi_\ka(x,y)}{\pa \nu(y)} \, {\rm d}s(y) \notag \\
  &- \int_{B} \left[ \De u_\ka^\Om(y) + \ka^2 u_\ka^\Om(y) \right] \Phi_\ka(x,y) \, {\rm d}y.
\end{align}
We can deduce the boundary integral in \eqref{eq:18} is zero by using Green's Theorem in $B' \setminus \bar{B}$ for a ball $B'$ centered at the origion and the Sommerfeld radiation condition for $u_\ka^\Om$. Plugging \eqref{eq:10} into \eqref{eq:18} yields
\begin{align}
  \label{eq:21}
  u_\ka^\Om(x) = \ka^2 \int_\Om \Phi_\ka(x,y) m_\Om(y) u_\ka^{\rm t}(y) \, {\rm d}y, \quad x \in \RR^3.
\end{align}
That is, the Lippman-Schwinger equation remains valid for a point-source incident field located in the exterior of $\Om$.

Similar to Theorem~\ref{thm:point-plane}, we have the following crucial theorem for our subsequent use of recovering $\partial\Omega=\partial D+z$ for $(\Omega, n_\Omega)$. 

\begin{thm}
  Let $u_\ka^\Om, w_\ka(D,\hat{z};\cdot)$ be the scattered wave fields defined above in this section and $w^\infty_\ka(D,\hat{z}; \hat{x}), \hat{x} \in \BS^2$ be the far-field pattern corresponding to $w_\ka(D,\hat{z};\cdot)$, then there holds the asymptotic expansion \eqref{eq:23a}.
\end{thm}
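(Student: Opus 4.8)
The plan is to mirror the structure of the proof of Theorem~\ref{thm:point-plane}, replacing the layer-potential representation with the Lippmann-Schwinger representation \eqref{eq:21} that we have just established for the point-source incidence. The starting point is to translate \eqref{eq:21} into the reference frame of $D$ via the change of variables $x\mapsto x+z$, $y\mapsto y+z$. Using $m_\Om(y+z)=m_D(y)$ and the translation invariance $\Phi_\ka(x+z,y+z)=\Phi_\ka(x,y)$ of the fundamental solution, I would obtain
\begin{align*}
  u_\ka^\Om(x+z) = \ka^2 \int_D \Phi_\ka(x,y) m_D(y) u_\ka^{\rm t}(y+z) \, {\rm d}y, \quad x \in \RR^3.
\end{align*}
This reduces the problem to understanding the total field $u_\ka^{\rm t}(y+z)$ for $y\in D$, which is the exact analogue of \eqref{eq:27} in the impenetrable case.

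Next I would establish that $u_\ka^{\rm t}(y+z)$, for $y\in D$, is asymptotically a scalar multiple of the plane-wave total field $w_\ka^{\rm t}(D,\hat{z};y)$. The heuristic is the same crucial observation used earlier: near $\Om$ the point source $u_\ka^{\rm in}$ behaves like a plane wave propagating in the $\hat z$-direction, scaled by the factor $e^{\ii\ka\|z\|}/(4\pi\|z\|)$. Quantitatively, the expansion $\|x+z\|=\|z\|+\hat z\cdot x+\mc{O}(\|z\|^{-1})$ gives, exactly as in \eqref{eq:26},
\begin{align*}
  u_\ka^{\rm in}(x+z) = \frac{e^{\ii \ka \|z\|}}{4 \pi \|z\|} w_\ka^{\rm in}(\hat{z};x) \left[ 1 + \mc{O}(\|z\|^{-1}) \right], \quad x \in D.
\end{align*}
To pass from the incident fields to the total fields, I would compare the two Lippmann-Schwinger integral equations: $u_\ka^{\rm t}(\cdot+z)$ solves the equation with data $u_\ka^{\rm in}(\cdot+z)$ on the domain $D$ with index $n_D$, while $w_\ka^{\rm t}(D,\hat z;\cdot)$ solves the same equation with data $w_\ka^{\rm in}(\hat z;\cdot)$. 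Since the volume integral operator $f\mapsto \ka^2\int_D \Phi_\ka(\cdot,y) m_D(y) f(y)\,{\rm d}y$ is bounded and $I$ minus it is boundedly invertible on $L^2(D)$ (by well-posedness of the medium scattering problem \cite{CK}), the difference of the data propagates linearly to the difference of the solutions, yielding
\begin{align*}
  u_\ka^{\rm t}(y+z) = \frac{e^{\ii \ka \|z\|}}{4 \pi \|z\|} w_\ka^{\rm t}(D,\hat{z};y) \left[ 1 + \mc{O}(\|z\|^{-1}) \right], \quad y \in D.
\end{align*}

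Substituting this into the translated representation and recognizing the right-hand side as $\frac{e^{\ii\ka\|z\|}}{4\pi\|z\|}w_\ka(D,\hat z;x)[1+\mc{O}(\|z\|^{-1})]$ via \eqref{eq:29} recovers the scaled scattered field, and then undoing the change of variables gives the exact analogue of \eqref{eq:39}. Finally, inserting the far-field expansion of $w_\ka(D,\hat z;x-z)$ as $\|x-z\|\to\infty$ (valid since $dist(\Om,\Ga)\gg1$) produces \eqref{eq:23a}, completing the argument. The main obstacle I anticipate is the uniformity of the $\mc{O}(\|z\|^{-1})$ error when transferring it through the inverse operator: I must ensure the operator norm of $(I-\ka^2\int_D\Phi_\ka m_D\,\cdot)^{-1}$ is bounded by a constant independent of $z$. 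This is in fact immediate because the reference operator depends only on $D$, $n_D$, and $\ka$ and not on $z$ at all, so the error control is uniform in $\hat z\in\BS^2$ as required. Care is also needed at the boundary $\pa D$ where $m_D$ may be discontinuous, but since the estimate is in the $L^2(D)$ norm rather than pointwise, the Lipschitz regularity of $D$ suffices.
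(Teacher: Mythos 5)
Your proof is correct and takes essentially the same route as the paper's: both translate the Lippmann--Schwinger formulation to the reference frame of $D$, approximate $u_\ka^{\rm in}(\cdot+z)$ by the scaled plane wave via \eqref{eq:26}, push the $\mc{O}(\|z\|^{-1})$ error through the $z$-independent bounded inverse $(I-\mc{T}_\ka^D)^{-1}$, and conclude with the far-field expansion. The only cosmetic difference is that you invert the total-field equation $(I-\mc{T}_\ka^D)u_\ka^{\rm t}(\cdot+z)=u_\ka^{\rm in}(\cdot+z)$ and substitute back into the volume representation, whereas the paper works directly with the scattered-field solution operator $(I-\mc{T}_\ka^D)^{-1}\mc{T}_\ka^D$ as in \eqref{eq:32} and \eqref{eq:33}; the two are algebraically equivalent.
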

\begin{proof}
Define the operator
\begin{align}
  \label{eq:30}
  \mc{T}_\ka^\Om [v](x) = \ka^2 \int_\Om \Phi_\ka(x,y) m_\Om(y) v(y) \, {\rm d}y.
\end{align}
It is known that $\mc{T}_\ka^\Om$ is a bounded operator from $L^2(\Om)$ to $H^2(\Om)$; we refer to \cite{CK} for more discussion about the mapping properties of this volume integral operator. 
Then we may rewrite \eqref{eq:21} as
\begin{align}
  \label{eq:22}
  (I-\mc{T}_\ka^\Om)u_\ka^\Om = \mc{T}_\ka^\Om u_\ka^{\rm in}.
\end{align}
Similarly we may rewrite \eqref{eq:29} as
\begin{align}
  \label{eq:31}
  (I-\mc{T}_\ka^D) w_\ka(D,\hat{z},\cdot) = \mc{T}_\ka^D w_\ka^{\rm in}(\hat{z};\cdot),
\end{align}
where $\mc{T}_\ka^D$ is defined in the same way as \eqref{eq:30} but with $\Om$ replaced by $D$. 

Introduce the change of variables $\tl{x}=x-z, \tl{u}_\ka^\Om(\tl{x}) = u_\ka^\Om(x)$, then it is easy to verify $\mc{T}_\ka^\Om[u_\ka^\Om](x) = \mc{T}_\ka^D [\tl{u}_\ka^\Om](\tl{x})$. Hence it follows from \eqref{eq:22} that
\begin{align*}
  \tl{u}_\ka^\Om(\tl{x}) = \left[ (I-\mc{T}_\ka^D)^{-1}\mc{T}_\ka^D \right] u_\ka^{\rm in}(x),
\end{align*}
or
\begin{align}
  \label{eq:32}
  \tl{u}_\ka^\Om(x) = \left[ (I-\mc{T}_\ka^D)^{-1}\mc{T}_\ka^D \right] u_\ka^{\rm in}(x+z).
\end{align}
On the other hand it follows from \eqref{eq:31} that
\begin{align}
  \label{eq:33}
  w_\ka(D,\hat{z},x) = \left[ (I-\mc{T}_\ka^D)^{-1} \mc{T}_\ka^D \right] w_\ka^{\rm in}(\hat{z};x).
\end{align}
Since $(I-\mc{T}_\ka^D)^{-1}$ and $\mc{T}_\ka^D$ are bounded (cf. \cite{CK}), combining \eqref{eq:26}, \eqref{eq:32} and \eqref{eq:33} yields
\begin{align*}
  \tl{u}_\ka^\Om(x) = \frac{e^{\ii \ka \|z\|}}{4 \pi \|z\|} w_\ka(D,\hat{z};x) \left[ 1 + \mc{O}(\|z\|^{-1}) \right].
\end{align*}
Reverting the change of variable we conclude
\begin{align*}
  u_\ka^\Om(x) = \frac{e^{\ii \ka \|z\|}}{4 \pi \|z\|} w_\ka(D,\hat{z};x-z) \left[ 1 + \mc{O}(\|z\|^{-1}) \right].
\end{align*}
Applying the far-field expansion yields \eqref{eq:23a}.

The proof is complete. 
\end{proof}

\subsection{Determination of the location}\label{sect:location}

We are now in a position to present the gesture recognition algorithm of recovering $\Omega=D+z$ by using $u_\ka^\Om|_\Gamma$. In the first step, we shall determine the location of $z$ of $\Om$. We shall achieve this by using an incident point wave with a relatively small wavenumber $\ka$. The following result shall be of importance in designing our algorithm. 

\begin{thm} \label{lem:small}
  Let $D$ be an impenetrable soft scatterer. Let $w_\kappa(D,\hat z; x)$ and $w_\kappa^\infty(D, \hat z; \hat x)$ be defined in Section~\ref{sect:soundsoft}, then there holds
  \begin{align}
    \label{eq:7}
    \lim_{\ka \to +0} w_\ka^\infty(D,\hat{z};\hat{x}) = \al(D)
  \end{align}
uniformly for all $\hat{z} \in \BS^2$ and $\hat{x} \in \BS^2$, where $\al(D)$ is a constant depending only on $D$.
\end{thm}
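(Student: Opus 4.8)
The plan is to analyze the low-frequency (long-wavelength) asymptotics of the scattered field $w_\ka(D,\hat{z};\cdot)$ for a sound-soft obstacle and show that the leading behaviour of its far-field pattern is independent of both the incident direction $\hat{z}$ and the observation direction $\hat{x}$. The physical intuition is that at low frequencies the scattering by a bounded obstacle is dominated by an isotropic monopole term, so the far-field pattern degenerates to a constant. I would make this rigorous by working with the boundary integral equation already set up in the proof of Theorem~\ref{thm:point-plane}.

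First I would recall that $w_\ka(D,\hat z;x) = -\left(\mc{K}_\ka^D + \ii\eta\mc{S}_\ka^D\right)\left(\frac{I}{2} + \mc{K}_\ka^D + \ii\eta\mc{S}_\ka^D\right)^{-1} w_\ka^{\rm in}(\hat z;x)$, where $w_\ka^{\rm in}(\hat z;x)=e^{\ii\ka\hat z\cdot x}$. The key observation is that as $\ka\to+0$, the kernel $\Phi_\ka(x,y)=\frac{e^{\ii\ka\|x-y\|}}{4\pi\|x-y\|}$ converges (uniformly on $\pa D$, which is compact and bounded away from the diagonal issues in the usual weakly-singular sense) to the Laplace fundamental solution $\Phi_0(x,y)=\frac{1}{4\pi\|x-y\|}$, and correspondingly $\mc{S}_\ka^D\to\mc{S}_0^D$ and $\mc{K}_\ka^D\to\mc{K}_0^D$ in operator norm on $C(\pa D)$ (or the appropriate Lipschitz-space setting). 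Likewise $w_\ka^{\rm in}(\hat z;\cdot)\to 1$ uniformly on $\pa D$, so the incident data converges to the constant function $\mathbf{1}$, which crucially carries no dependence on $\hat z$. I would then argue that $\left(\frac{I}{2}+\mc{K}_0^D+\ii\eta\mc{S}_0^D\right)$ remains boundedly invertible at $\ka=0$ (the combined-layer operator is invertible for the exterior Laplace problem for the same Brakhage--Werner reason cited in the excerpt), so the solution operator converges and the limiting density $\vp_0^D$ solves the $\ka=0$ equation with right-hand side $-\mathbf{1}$. Hence $w_\ka(D,\hat z;\cdot)\to w_0(D;\cdot)$, a field that is independent of $\hat z$.

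Next I would pass to the far-field pattern. Since $w_\ka^\infty(D,\hat z;\hat x)$ is obtained from the density $\vp_\ka^D$ through the far-field kernels of the layer potentials, I would extract the leading term of these kernels as $\ka\to 0$: the radiating part of $\Phi_\ka$ contributes a factor $e^{-\ii\ka\hat x\cdot y}/(4\pi)$ to the single-layer far-field and an analogous $\hat x$-dependent factor to the double-layer far-field. As $\ka\to 0$ both of these directional factors tend to $1$ uniformly in $\hat x$, so all $\hat x$-dependence disappears in the limit, and $w_\ka^\infty(D,\hat z;\hat x)$ converges to $\al(D):=\frac{\ii\eta}{4\pi}\int_{\pa D}\vp_0^D(y)\,{\rm d}s(y)$ (plus the vanishing double-layer contribution), which depends only on $D$ through the limiting density. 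Because the density limit is already independent of $\hat z$ and the kernel limits are independent of $\hat x$, the convergence in \eqref{eq:7} is uniform in both arguments, as claimed.

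The main obstacle I expect is justifying the uniform operator-norm convergence $\mc{S}_\ka^D\to\mc{S}_0^D$ and $\mc{K}_\ka^D\to\mc{K}_0^D$ together with the uniform invertibility of the combined operator down to $\ka=0$, especially in the Lipschitz-domain setting where one must invoke the mapping properties from \cite{Mcl} rather than classical $C^2$ potential theory; one has to check that the limiting $\ka=0$ operator is genuinely invertible and that the inverse is norm-continuous at $\ka=0$ so that no resonance or eigenvalue degeneracy spoils the limit. The remaining analytic subtlety is controlling the weak singularity of $\Phi_\ka-\Phi_0$ uniformly so that the convergence of the kernels is in the correct operator topology; this is routine but must be stated with care. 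Once these convergence facts are in hand, the identification of $\al(D)$ as a $D$-only constant and the uniformity in $\hat z,\hat x$ follow immediately from the two independence observations above.
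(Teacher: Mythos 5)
Your proposal is correct and follows essentially the same route as the paper: both use the Brakhage--Werner combined-layer representation from Theorem~\ref{thm:point-plane}, show that the density converges as $\ka\to+0$ to the solution $\vp_0$ of the limiting integral equation with the $\hat z$-independent right-hand side $-\mathbf{1}$, and then observe that the low-frequency far-field kernels lose all $\hat x$-dependence (with the double-layer contribution vanishing), yielding a constant $\al(D)$ given by an integral of $\vp_0$ over $\pa D$. The only cosmetic difference is that you carry the factor $\ii\eta$ from the single-layer potential into $\al(D)$ while the paper's displayed constant omits it (an inessential slip on the paper's side), and you organize the limit via operator-norm convergence where the paper writes the equivalent splitting $\mc{B}_\ka w_\ka^{\rm in}=\vp_0+\psi_\ka$ with $\psi_\ka\to 0$.
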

\begin{proof}
Similar to the proof of Theorem~\ref{thm:point-plane}, we assume that $\partial D$ is $C^2$-continuous so that we can make use of the Brakhage-Werner trick. Denote $\mc{A}_\ka := (\mc{K}_\ka^D + \ii \eta \mc{S}_\ka^D)$ and $\mc{B}_\ka := \left( \frac{I}{2} + \mc{K}_\ka^D + \ii \eta \mc{S}_\ka^D \right)^{-1}$. Since $\mc{A}_\ka, \mc{B}_\ka$ are uniformly bounded as $\ka \to +0$ and $\mc{B}_\ka \to \mc{B}_0$ as $\ka \to +0$, and $w_\ka^{\rm in}(\hat{z};x) = w_0^{\rm in}(\hat{z};x) + \mc{O}(\ka), x \in \pa D$ uniformly for all $\hat{z} \in \BS^2$, we have
  \begin{align*}
    \mc{B}_\ka w_\ka^{\rm in} &= \mc{B}_0 w_0^{\rm in} + \left( \mc{B}_\ka w_0^{\rm in} - \mc{B}_0 w_0^{\rm in} \right) + \left( \mc{B}_\ka w_\ka^{\rm in} - \mc{B}_\ka w_0^{\rm in} \right) = \vp_0 + \psi_\ka,
  \end{align*}
where $\vp_0 := \mc{B}_0 w_0^{\rm in}$ and $\psi_\ka \to 0$ as $\ka \to +0$. 

For $y \in \pa D$ and $\|x\| \to +\infty$ we have the expansion,
\begin{align*}
 \|x-y\| = \|x\| - \hat{x} \cdot y + \mc{O}(\|x\|^{-1})
\end{align*}
uniformly for all $\hat{x} \in \BS^2$. For $y \in \pa D$, $\|x\| \to +\infty$ and $\ka \to +0$, we have
\begin{align}
  e^{\ii \ka \|x-y\|} &= e^{\ii \ka \|x\|} \left[ 1 + \mc{O}(\ka) \right], \notag \\[5pt]
    \Phi_\ka(x,y) &= \dfrac{e^{\ii \ka \|x\|}}{4 \pi \|x\|} \left[ 1 + \mc{O}(\|x\|^{-1}) + \mc{O}(\ka) \right], \label{eq:36} \\[5pt]
    \dfrac{\pa \Phi_\ka(x,y)}{\pa \nu(y)} &= \dfrac{e^{\ii \ka \|x\|}}{4\pi \|x\|} \left[ \mc{O}(\|x\|^{-1}) + \mc{O}(\ka) \right]. \notag
\end{align}
It then follows that
\begin{align*}
  \mc{A}_\ka \vp_0 & = \frac{e^{\ii \ka \|x\|}}{\|x\|} \left[ \al(D) + \mc{O}(\|x\|^{-1}) + \mc{O}(\ka) \right], \\
  \mc{A}_\ka \psi_\ka &= \frac{e^{\ii \ka \|x\|}}{\|x\|} \xi_\ka,
\end{align*}
where
\begin{align*}
  \al(D) = \frac{1}{4\pi} \int_{\pa D} \vp_0(y) \, {\rm d}s(y)
\end{align*}
is a constant depending only on $D$ and
\begin{align*}
  \xi_\ka &= \frac{1}{4\pi} \int_{\pa D} \left[ 1 + \mc{O}(\|x\|^{-1}) + \mc{O}(\ka) \right] \psi_\ka(y) \, {\rm d}s(y) \notag \\
  &+ \frac{\ii \eta}{4\pi} \int_{\pa D} \left[ \mc{O}(\|x\|^{-1}) + \mc{O}(\ka) \right] \psi_\ka(y) \, {\rm d}s(y) \to 0
\end{align*}
as $\ka \to +0$. Hence
\begin{align*}
  w_\ka(D,\hat{z};x) &= \mc{A}_\ka \mc{B}_\ka w_\ka^{\rm in} = \mc{A}_\ka (\vp_0 + \psi_\ka) \notag \\
  &= \frac{e^{\ii \ka \|x\|}}{\|x\|} \left[ \al(D) + \mc{O}(\|x\|^{-1}) + \mc{O}(\ka) + \xi_\ka \right]
\end{align*}
and \eqref{eq:7} follows from the far-field expansion.

The proof is complete. 
\end{proof}

\medskip

Similar to Theorem~\ref{lem:small}, we have the following result for the scattering from a medium scatterer $(\Omega, n_\Om)$.

\begin{thm} \label{lem:small-medium}
 Let $(D, n_D)$ be a penetrable medium scatterer. Let $w_\kappa(D,\hat z; x)$ and $w_\kappa^\infty(D, \hat z; \hat x)$ be defined in Section~\ref{sect:medium}, then there holds
  \begin{align}
    \label{eq:35}
    w_\ka^\infty(D,\hat{z};\hat{x}) = \ka^2 \left[ \al(D,n_D) + \mc{O}(\ka) \right],
  \end{align}
  where $\al(D)$ is a constant depending only on $D$ and $n_D$.
\end{thm}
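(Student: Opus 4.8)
The plan is to read off the far-field pattern directly from the Lippmann--Schwinger representation \eqref{eq:29} and then track the $\ka$-dependence of each factor as $\ka \to +0$. First I would insert the standard far-field asymptotics of the fundamental solution,
\[
\Phi_\ka(x,y) = \frac{e^{\ii\ka\|x\|}}{4\pi\|x\|}\, e^{-\ii\ka\hat{x}\cdot y}\left[1 + \mc{O}(\|x\|^{-1})\right], \qquad \|x\|\to\infty,
\]
uniformly for $y \in D$, into \eqref{eq:29} and compare with the far-field expansion in the normalization of \eqref{eq:ffp}. This yields the explicit representation
\[
w_\ka^\infty(D,\hat{z};\hat{x}) = \frac{\ka^2}{4\pi}\int_D e^{-\ii\ka\hat{x}\cdot y}\, m_D(y)\, w_\ka^{\rm t}(D,\hat{z};y)\, {\rm d}y,
\]
in which the prefactor $\ka^2$ is already displayed; everything then reduces to showing that the remaining integral converges to $\int_D m_D(y)\,{\rm d}y$ with an $\mc{O}(\ka)$ correction, uniformly in $\hat{z}, \hat{x} \in \BS^2$.

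Second I would control the total field $w_\ka^{\rm t} = w_\ka^{\rm in} + w_\ka$ on $D$ for small $\ka$. Rewriting \eqref{eq:29} in the operator form \eqref{eq:31}, namely $(I - \mc{T}_\ka^D) w_\ka = \mc{T}_\ka^D w_\ka^{\rm in}$, and using the mapping properties of the volume potential operator $\mc{T}_\ka^D$ cited from \cite{CK}, the explicit factor $\ka^2$ in \eqref{eq:30} gives $\|\mc{T}_\ka^D\| = \mc{O}(\ka^2)$ as $\ka \to +0$, since the remaining weakly singular kernel operator is bounded uniformly for $\ka$ near $0$. Hence $(I - \mc{T}_\ka^D)^{-1}$ exists for all sufficiently small $\ka$ by a Neumann series and equals $I + \mc{O}(\ka^2)$, so that $w_\ka = \mc{O}(\ka^2)$ in $H^2(D)$ and thus, by the Sobolev embedding $H^2(D)\hookrightarrow C(\overline{D})$, also pointwise on $D$. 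Combined with $w_\ka^{\rm in}(\hat{z};y) = e^{\ii\ka\hat{z}\cdot y} = 1 + \mc{O}(\ka)$ for $y$ in the bounded set $D$, this shows $w_\ka^{\rm t}(D,\hat{z};y) = 1 + \mc{O}(\ka)$ uniformly for $\hat{z} \in \BS^2$ and $y \in D$.

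Third I would assemble the pieces. Expanding $e^{-\ii\ka\hat{x}\cdot y} = 1 + \mc{O}(\ka)$ uniformly for $y \in D$ and $\hat{x} \in \BS^2$, and multiplying by the estimate for $w_\ka^{\rm t}$ just obtained, the integrand equals $m_D(y)[1 + \mc{O}(\ka)]$. Integrating over the bounded domain $D$ then produces
\[
w_\ka^\infty(D,\hat{z};\hat{x}) = \ka^2\left[\frac{1}{4\pi}\int_D m_D(y)\,{\rm d}y + \mc{O}(\ka)\right],
\]
which is precisely \eqref{eq:35} with $\al(D,n_D) = \frac{1}{4\pi}\int_D m_D(y)\,{\rm d}y = \frac{1}{4\pi}\int_D (n_D(y)-1)\,{\rm d}y$, manifestly depending only on $D$ and $n_D$.

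The main obstacle I anticipate is the uniformity of the estimates rather than the leading-order computation, which is elementary. Specifically, I must ensure that the operator-norm bound $\|\mc{T}_\ka^D\| = \mc{O}(\ka^2)$ and the resulting estimate $w_\ka = \mc{O}(\ka^2)$ hold uniformly in the incident direction $\hat{z}$, and that all the $\mc{O}(\ka)$ and $\mc{O}(\|x\|^{-1})$ remainders in the far-field asymptotics are uniform over $\hat{x}, \hat{z} \in \BS^2$ and $y \in D$. This is where the quantitative mapping properties of the volume integral operator from \cite{CK} are essential; once these are in place, the remaining steps are routine Taylor expansions over the compact support $D$.
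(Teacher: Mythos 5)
Your proposal is correct and takes essentially the same route as the paper's proof: both rest on the Lippmann--Schwinger equation \eqref{eq:29}, the operator bound $\|\mc{T}_\ka^D\| = \mc{O}(\ka^2)$ with the resulting (Neumann-series) control of $(I-\mc{T}_\ka^D)^{-1}$, and the Taylor expansions $e^{\ii\ka\hat z\cdot y}=1+\mc{O}(\ka)$ and \eqref{eq:36} --- you merely pass to the exact far-field representation first and estimate the total field $w_\ka^{\rm t}=1+\mc{O}(\ka)$ on $D$, whereas the paper expands $\mc{T}_\ka^D w_\ka^{\rm in}$ in the near field and then takes the far-field limit. Your constant $\al(D,n_D)=\frac{1}{4\pi}\int_D m_D(y)\,{\rm d}y$ is the one consistent with the normalization \eqref{eq:ffp} (the paper states $\al(D,n_D)=\int_D m_D(y)\,{\rm d}y$ while keeping the $\frac{1}{4\pi}$ in the prefactor), a purely cosmetic discrepancy that does not affect the statement.
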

\begin{proof}
  Using the expansion \eqref{eq:36} and $e^{-\ii \ka \hat{z} \cdot y} = 1 + \mc{O}(\ka)$, we deduce
  \begin{align}
    \mc{T}_\ka^D w_\ka^{\rm in}(\hat{z};x) &= \ka^2 \frac{e^{\ii \ka \|x\|}}{4 \pi \|x\|} \int_D \left[1+\mc{O}(\|x\|^{-1}) + \mc{O}(\ka) \right] [1+\mc{O}(\ka)] m_D(y) \, {\rm d}y \notag \\
    &= \frac{e^{\ii \ka \|x\|}}{4 \pi \|x\|} \left[ \ka^2 \al(D,n_D) + \mc{O}(\|x\|^{-1}) + \mc{O}(\ka^3) \right]     \label{eq:37}
  \end{align}
  as $\|x\| \to \infty$ and $\ka \to +0$, where $\al(D,n_D) = \int_D m_D(y) \, {\rm d}y$. It is also easy to see $\|T_\ka^D\|_{\mathcal{L}(L^2(D), H^2(D))} = \mc{O}(\ka^2)$ as $\ka \to +0$. Hence it follows from \eqref{eq:33} and \eqref{eq:37} that
  \begin{align*}
    w_\ka(D,\hat{z};x) = \frac{e^{\ii \ka \|x\|}}{4 \pi \|x\|} \left[ \ka^2 \al(D) + \mc{O}(\|x\|^{-1}) + \mc{O}(\ka^3) \right],
  \end{align*}
 which readily implies \eqref{eq:35} by using the far-field expansion.
 
 The proof is complete. 
\end{proof}

With the help of Theorems~\ref{lem:small} and \ref{lem:small-medium}, we are ready to present the scheme of recovering the location point $z$ of the scatterer $\Omega=D+z$. Without loss of generality, we assume that the scatterer is located in the half-space $\RR^3_+:=\{x=(x^1,x^2,x^3):x^1>0 \}$. Let the measurement surface $\Gamma$ be a bounded set in the $x^2x^3$-plane. Let $\widetilde{z} \in \RR^3_+$ be an arbitrary sampling point contained in a bounded sampling region $S \subset \RR^3_+$. In view of Theorems \ref{thm:point-plane} and \ref{lem:small}, we propose the following indicator functional for the determination of the location of $\Om=D+z$:
\begin{align}
  \label{eq:34}
  I_\ka(D,z;\widetilde z) := \frac{\left| \big\langle u_\ka(D,z; \cdot), \mathring{u}_\ka(\widetilde z;\cdot) \big\rangle_{L^2(\Ga)} \right|}{\left\| u_\ka(D,z;\cdot) \right\|_{L^2(\Ga)} \left\| \mathring{u}_\ka (\widetilde z;\cdot) \right\|_{L^2(\Ga)}}, \quad \widetilde z \in S,
\end{align}
where $u_\ka(D,z;x)=u_\ka^\Om(x), x \in \Ga$ is the scattered wave field (cf. \eqref{eq:4} and \eqref{eq:10}) measured on the surface $\Ga$ due to the scatterer $\Om$ and the incident field $u_\ka^{\rm in}$ in \eqref{eq:2}; and the test function
\begin{align*}
  \mathring{u}_\ka(\widetilde z;x) := \frac{e^{\ii \ka \|\widetilde z\|}}{4 \pi \|\widetilde z\|} \frac{e^{\ii \ka \|x-\widetilde z\|}}{\|x-\widetilde z\|}.
\end{align*}
If the measurement data are phaseless, then we modify the indicator functional as
\begin{align}
  \label{eq:24}
  I_\ka(D,z;\widetilde z) := \frac{\left| \left\langle |u_\ka(D,z; \cdot)|, | \mathring{u}_\ka(\widetilde z;\cdot) | \right\rangle_{L^2(\Ga)} \right|}{\left\| u_\ka(D,z;\cdot) \right\|_{L^2(\Ga)} \left\| \mathring{u}_\ka (\widetilde z; \cdot) \right\|_{L^2(\Ga)}}, \quad \widetilde z \in S.
\end{align}

We can show the following indicating behavior of the functionals introduced in \eqref{eq:34} and \eqref{eq:24}, which can help us to find the location point $z$. 

\medskip

\begin{thm} \label{thm:location}
 Let $\al(D)$ be given in Theorem \ref{lem:small} (if $D$ is an impenetrable soft scatterer) or Theorem \ref{lem:small-medium} (if $D$ is a penetrable medium scatterer) and assume $\al(D) \neq 0$ for all $D \in \ms{A}$. Then we have the following asymptotic expansion
  \begin{align}
    \label{eq:9}
    \lim_{\ka \to 0} I_\ka(D,z; \widetilde z) = \mathring{I}_0(z;\widetilde z) \left[ 1 + \mc{O}(\|z\|^{-1}) \right], \quad |z| \to \infty
  \end{align}
uniformly for all $D \in \ms{A}, \hat{z} \in \BS^2$ and $\widetilde z \in S$, where
\begin{align*}
  \mathring{I}_\ka(z;\widetilde z) = \frac{\left| \big\langle \mathring{u}_\ka(z;\cdot), \mathring{u}_\ka(\widetilde z;\cdot) \big\rangle_{L^2(\Ga)} \right|}{\left\| \mathring{u}_\ka(z,\cdot) \right\|_{L^2(\Ga)} \left\| \mathring{u}_\ka (\widetilde z;\cdot) \right\|_{L^2(\Ga)}}, \quad \widetilde z \in S
\end{align*}
if $I_\ka$ is given by \eqref{eq:34}; or
\begin{align*}
  \mathring{I}_\ka(z;\widetilde z) = \frac{\left| \big\langle |\mathring{u}_\ka(z;\cdot)|, | \mathring{u}_\ka(\widetilde z;\cdot)| \big\rangle_{L^2(\Ga)} \right|}{\left\| \mathring{u}_\ka(z;\cdot) \right\|_{L^2(\Ga)} \left\| \mathring{u}_\ka (\widetilde z;\cdot) \right\|_{L^2(\Ga)}}, \quad \widetilde z \in S,
\end{align*}
if $I_\ka$ is given by \eqref{eq:24}. The unique maximum of $\mathring{I}_\ka(z;\widetilde z)$ is obtained at $\widetilde z=z$ with maximal value $1$.
\end{thm}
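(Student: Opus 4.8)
The plan is to substitute the asymptotic representation \eqref{eq:23a} into the quotient \eqref{eq:34} and exploit the fact, furnished by Theorems~\ref{lem:small} and \ref{lem:small-medium}, that in the low-wavenumber regime the far-field pattern $w_\ka^\infty(D,\hat z;\cdot)$ degenerates into a constant that is independent of the observation direction. Since the indicator functional is a normalized (cosine-type) quotient, any factor that is constant in $x \in \Ga$ cancels between numerator and denominator; this is exactly what produces the reduction from $I_\ka(D,z;\widetilde z)$ to the purely geometric quantity $\mathring I_0(z;\widetilde z)$ built from the test functions alone. The final assertion on the location of the maximum is then a Cauchy--Schwarz statement combined with a rigidity argument for the measurement aperture.

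First I would write, using \eqref{eq:23a}, $u_\ka(D,z;x) = \mathring u_\ka(z;x)\,[w_\ka^\infty(D,\hat z;\widehat{x-z}) + \mc{O}(\|z\|^{-1})][1+\mc{O}(\|z\|^{-1})]$ for $x\in\Ga$, where the prefactor is precisely the test function $\mathring u_\ka(z;x)$. Letting $\ka\to+0$ and invoking Theorem~\ref{lem:small} in the soft case gives $w_\ka^\infty(D,\hat z;\widehat{x-z})\to\al(D)$ uniformly in $\hat z$ and in $x\in\Ga$; in the medium case Theorem~\ref{lem:small-medium} gives $w_\ka^\infty = \ka^2[\al(D,n_D)+\mc{O}(\ka)]$, and the common scalar $\ka^2$, being constant in $x$, cancels in \eqref{eq:34}, so that the effective leading coefficient is again the direction-independent constant $\al(D,n_D)$. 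Thus in both cases $\lim_{\ka\to0}u_\ka(D,z;x) = \mathring u_0(z;x)[\al(D)+h(x)]$ with $\|h\|_{L^\infty(\Ga)} = \mc{O}(\|z\|^{-1})$. Because $\al(D)\neq0$ by hypothesis, I can factor $\al(D)$ out of the inner product and out of both norms in \eqref{eq:34}; the factor cancels and the residual $h$ contributes only a relative $\mc{O}(\|z\|^{-1})$ error, after an application of Cauchy--Schwarz to bound $\langle h\,\mathring u_0(z;\cdot),\mathring u_0(\widetilde z;\cdot)\rangle$ and $\|h\,\mathring u_0(z;\cdot)\|$ by $\mc{O}(\|z\|^{-1})$ times the corresponding quantities for $\mathring u_0(z;\cdot)$. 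This yields \eqref{eq:9}; the phaseless version \eqref{eq:24} is handled identically, replacing each field by its modulus and noting $|\al(D)+h| = |\al(D)|(1+\mc{O}(\|z\|^{-1}))$.

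For the maximum of $\mathring I_\ka(z;\widetilde z)$, I would read the quotient as the normalized inner product of $\mathring u_\ka(z;\cdot)$ and $\mathring u_\ka(\widetilde z;\cdot)$ in $L^2(\Ga)$ (respectively of their moduli). By the Cauchy--Schwarz inequality $\mathring I_\ka\le1$, with equality if and only if the two functions are linearly dependent on $\Ga$; at $\widetilde z=z$ they coincide, so the value $1$ is attained. For uniqueness, linear dependence forces $\|x-\widetilde z\| = c\,\|x-z\|$ for all $x\in\Ga$ and some constant $c>0$, obtained by taking moduli, the phases being irrelevant. If $c=1$ this says $\Ga$ lies on the perpendicular bisector plane of $z$ and $\widetilde z$; since $\Ga$ has nonempty relative interior in the $x^2x^3$-plane, that bisector must coincide with the $x^2x^3$-plane, which would make $z$ and $\widetilde z$ mirror images across it, impossible for two points of the open half-space $\RR^3_+$ unless $z=\widetilde z$. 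If $c\neq1$ the locus $\{x:\|x-\widetilde z\|=c\|x-z\|\}$ is an Apollonius sphere, whose intersection with a plane is at most a circle and therefore cannot contain the two-dimensional aperture $\Ga$. Hence $\widetilde z=z$ is the unique maximizer.

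The step I expect to be the main obstacle is this last rigidity argument, i.e. ruling out spurious maximizers: everything hinges on $\Ga$ being a genuinely two-dimensional planar aperture and on $z,\widetilde z$ lying in the same open half-space, and one must state the Apollonius-sphere dichotomy so as to cover the limiting planar case $c=1$ as well. A secondary technical point is the interchange of the two asymptotic regimes $\ka\to+0$ and $\|z\|\to\infty$ in \eqref{eq:9}: I would keep $z$ fixed while passing to the low-wavenumber limit, using the uniformity in $\hat z$ and $x\in\Ga$ already provided by Theorems~\ref{lem:small} and \ref{lem:small-medium}, and only afterwards track the $\mc{O}(\|z\|^{-1})$ dependence, taking care that the cancellation of $\al(D)$ remains valid uniformly over the finite dictionary $\ms{A}$ since $\al(D)\neq0$ for every $D$.
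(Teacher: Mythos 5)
Your proposal is correct and follows the same main route as the paper: substitute the expansion \eqref{eq:23a} into the normalized quotient, use Theorem~\ref{lem:small} (resp.\ Theorem~\ref{lem:small-medium}) to replace the far-field pattern by the direction-independent constant $\al(D)$, cancel that constant in numerator and denominator using $\al(D)\neq 0$, and finish with Cauchy--Schwarz. Where you genuinely add value is the final uniqueness step: the paper disposes of it in one line, asserting that proportionality of $\mathring{u}_\ka(z;\cdot)$ and $\mathring{u}_\ka(\widetilde z;\cdot)$ on $\Ga$ ``clearly'' forces $\widetilde z=\pm z$, whereas your modulus-based reduction to $\|x-\widetilde z\|=c\|x-z\|$ on $\Ga$, with the dichotomy between the perpendicular-bisector plane ($c=1$) and the Apollonius sphere ($c\neq 1$), is a complete argument --- and it also corrects the paper's imprecision, since the actual spurious candidate is the mirror image $(-z^1,z^2,z^3)$ of $z$ across the plane containing $\Ga$ (for which $\mathring u_\ka$ literally coincides on $\Ga$), not the antipode $-z$; the restriction to the half-space $\RR^3_+$ rules it out either way. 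Likewise, your explicit observation that in the penetrable case the scalar $\ka^2$ in \eqref{eq:35} cancels in the normalized indicator before the limit $\ka\to+0$ is taken fills in a step the paper waves off as ``similar arguments,'' and is in fact needed for \eqref{eq:9} to be meaningful there, since $u_\ka(D,z;\cdot)\to 0$ pointwise in that regime.
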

\begin{proof}
We first consider the case when $D$ is an impenetrable scatterer and $I_\ka$ is given by \eqref{eq:34}. By Theorems \ref{thm:point-plane} and \ref{lem:small}, we have
  \begin{align}
    \lim_{\ka \to +0} u_\ka(D,z;x) &= \mathring{u}_0(z;x) \left[ \al(D) + \mc{O}(\|z\|^{-1}) \right] \left[ 1 + \mc{O}(\|z\|^{-1}) \right] \notag \\
    &= \mathring{u}_0(z;x) \al(D) \left[ 1 + \mc{O}(\|z\|^{-1}) \right], \quad \|z\| \to +\infty, \label{eq:12}
  \end{align}
uniformly for all $\hat{z} \in \BS^2$ and $x \in S$. Plugging \eqref{eq:12} into \eqref{eq:34} yields \eqref{eq:9} immediately.

It follows from the Cauchy-Schwarz inequality that $|\mathring{I}_\ka(z; \widetilde z)| \leq 1$ for all $\widetilde z \in \RR^3_+$ where the equality holds only when $\mathring{u}_\ka(z;\cdot)$ and $\mathring{u}_\ka(\widetilde z;\cdot)$ are constant multiples of each other. Clearly this occurs only if $\widetilde z=\pm z$, and furthermore since $z \in \RR^3_+$ and $\widetilde z \in \RR^3_+$, we must have that $\widetilde z=z$.

The case when $D$ is a penetrable medium scatterer and $I_\ka$ is given by \eqref{eq:24} follows from similar arguments. The proof is complete. 
\end{proof}

From Theorem \ref{thm:location} one can expect the maximum of $I_\ka(D,z;\widetilde z)$ will be achieved at $\widetilde z \approx z$. Furthermore one can expect that the maximum point is unique if $\ka$ is sufficiently small, which means the approximate location can be found efficiently using local optimization algorithms.

\subsection{Gesture recognition}\label{sect:shape}

After the determination of the approximate location $\mathring{z}={\rm arg\, max}_{\widetilde z}\, I_\ka(D,z; \widetilde z)$ of the gesture $\Omega=z+D$ in Section~\ref{sect:location}, we proceed to the determination of the shape $D$. To that end, we shall make use of another incident field with a wavenumber $\ka \simeq 1$. In view of Theorem \ref{thm:point-plane}, we propose the following indicator functional,
\begin{align}
  \label{eq:41}
  J_\ka(D_i,D_j;z,\mathring{z}) := \frac{\left| \big\langle u_\ka(D_i,z; \cdot), \hat{u}_\ka(D_j,\mathring{z};\cdot) \big\rangle_{L^2(\Ga)} \right|}{\left\| u_\ka(D_i,z;\cdot) \right\|_{L^2(\Ga)} \left\| \hat{u}_\ka (D_j,\mathring{z};\cdot) \right\|_{L^2(\Ga)}}
\end{align}
for $D_i, D_j \in \ms{A}$, where $u_\ka(D_i, z; \cdot):=u_\ka^{D_i+z}(\cdot)$ and the test function
\begin{align}\label{eq:data}
  \hat{u}_\ka(D_j,\mathring{z};x) := \frac{e^{\ii \ka \|\mathring{z}\|}}{4 \pi \|\mathring{z}\|} \frac{e^{\ii \ka \|x-\mathring{z}\|}}{\|x-\mathring{z}\|} w_\ka^\infty(D_j,\hat{\mathring{z}};\widehat{x-\mathring{z}}),
\end{align}
where $w_\ka^\infty$ is defined, respectively, in Sections~\ref{sect:soundsoft} and \ref{sect:medium}, corresponding to the cases when $D$ is impenetrable and penetrable. If the measurement data are phaseless, then we modify the indicator function as follows,
\begin{align}
  \label{eq:28}
  J_\ka(D_i,D_j;z,\mathring{z}) := \frac{\left| \big\langle | u_\ka(D_i,z; \cdot) |, | \hat{u}_\ka(D_j,\mathring{z};\cdot) | \big\rangle_{L^2(\Ga)} \right|}{\left\| u_\ka(D_i,z,\cdot) \right\|_{L^2(\Ga)} \left\| \hat{u}_\ka (D_j,\mathring{z};\cdot) \right\|_{L^2(\Ga)}}.
\end{align}

The identification of the shape $D$ is based on the following theorem.\medskip

\begin{thm}\label{thm:regular}
  Assume there exists a constant $c_0\in\mathbb{R}_+$ such that $\|u_\ka(D_i,z;\cdot)\|_{L^2(\Ga)} > c_0$ for all $D_i \in \ms{A}$. Then for any sufficiently small $\ve \in\mathbb{R}_+$ there exist $R\in\mathbb{R}_+$ and $\de\in\mathbb{R}_+$ such that if $\|z\| > R$ and $\|z - \mathring{z}\| < \de$,
  \begin{align*}
    \left| J_\ka(D_i,D_j;z,\mathring{z}) - \hat{J}_\ka(D_i,D_j;z) \right| < \ve, \quad \forall\, D_i,D_j \in \ms{A},
  \end{align*}
 where
\begin{align*}
  \hat{J}_\ka(D_i,D_j;z) := \frac{\left| \big\langle u_\ka(D_i,z; \cdot), u_\ka(D_j,z;\cdot) \big\rangle_{L^2(\Ga)} \right|}{\left\| u_\ka(D_i,z;\cdot) \right\|_{L^2(\Ga)} \left\| u_\ka (D_j,z;\cdot) \right\|_{L^2(\Ga)}}
\end{align*}
if $J_\ka$ is given by \eqref{eq:41}; or
\begin{align*}
  \hat{J}_\ka(D_i,D_j;z) = \frac{\left| \big\langle | u_\ka(D_i,z; \cdot) |, | u_\ka(D_j,z;\cdot) | \big\rangle_{L^2(\Ga)} \right|}{\left\| u_\ka(D_i,z;\cdot) \right\|_{L^2(\Ga)} \left\| u_\ka (D_j,z;\cdot) \right\|_{L^2(\Ga)}}
\end{align*}
if $J_\ka$ is given by \eqref{eq:28}.

If we further assume $u_\ka(D_i,z;\cdot)|_\Ga$ and $u_\ka(D_j,z;\cdot)|_\Ga$ are linearly independent for all $D_i,D_j \in \ms{A}$ with $i\neq j$ if $J_\ka$ is given by \eqref{eq:41} (or $|u_\ka(D_i,z;\cdot)|_\Ga|$ and $|u_\ka(D_j,z;\cdot)|_\Ga|$ are linearly independent for all $D_i,D_j \in \ms{A}$ in the case when $J_\ka$ is given by \eqref{eq:28}), then there exists $R>0$ and $\de>0$ such that if $\|z\| > R$ and $\|z - \mathring{z}\| < \de$, then there holds
\begin{align*}
  J_\ka(D_i,D_i;z,\mathring{z}) > J_\ka(D_i,D_j;z,\mathring{z}), \quad \forall\, i \neq j.
\end{align*}
\end{thm}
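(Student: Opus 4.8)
The plan is to read the statement as two linked assertions and prove them in order: first the stability estimate $|J_\ka - \hat J_\ka| < \ve$, and then the separation inequality that the diagonal correlation beats every off-diagonal one. The second will follow from the first together with the Cauchy--Schwarz inequality and the finiteness of the dictionary $\ms{A}$, so the bulk of the work is in the stability estimate.

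For the stability estimate the key input is Theorem~\ref{thm:point-plane}. I read \eqref{eq:23a} as saying that on the bounded aperture $\Ga$ the measured field $u_\ka(D_j,z;\cdot)$ coincides with the test profile evaluated at the \emph{true} location, $\hat u_\ka(D_j,z;\cdot)$, up to a relative $L^2(\Ga)$-error of order $\|z\|^{-1}$, since the two share the common scalar prefactor $\tfrac{e^{\ii\ka\|z\|}}{4\pi\|z\|}\tfrac{e^{\ii\ka\|x-z\|}}{\|x-z\|}$ and differ only by the additive $\mc{O}(\|z\|^{-1})$ correction inside the bracket of \eqref{eq:23a}. I would first record this as $\|u_\ka(D_j,z;\cdot)-\hat u_\ka(D_j,z;\cdot)\|_{L^2(\Ga)} \le C\|z\|^{-1}\|u_\ka(D_j,z;\cdot)\|_{L^2(\Ga)}$, the non-degeneracy needed for the relative form being guaranteed by the hypothesis $\|u_\ka(D_i,z;\cdot)\|_{L^2(\Ga)} > c_0$. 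Next, since $\hat u_\ka(D_j,\cdot;x)$ depends smoothly on the location parameter away from $x\in\Ga$ (both through the geometric factor and through the argument $\widehat{x-\mathring z}$ and direction $\hat{\mathring z}$ fed into $w_\ka^\infty$), a mean-value bound gives $\|\hat u_\ka(D_j,\mathring z;\cdot)-\hat u_\ka(D_j,z;\cdot)\|_{L^2(\Ga)} \le C\|z-\mathring z\|$. Combining the two, the test function actually used differs from $u_\ka(D_j,z;\cdot)$ by an $L^2(\Ga)$-error that is small once $\|z\| > R$ and $\|z-\mathring z\| < \de$.

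Now $J_\ka$ and $\hat J_\ka$ are both normalized correlations of the form $(f,g)\mapsto |\langle f,g\rangle|/(\|f\|\,\|g\|)$, and this map is Lipschitz in its second slot on any region where $\|f\|,\|g\|$ are bounded below. The bound $c_0$ controls the first slot directly, and it transfers to $\hat u_\ka(D_j,\mathring z;\cdot)$ because that function is $L^2(\Ga)$-close to $u_\ka(D_j,z;\cdot)$, whose norm also exceeds $c_0$. Feeding the error of the previous paragraph into this Lipschitz bound yields $|J_\ka(D_i,D_j;z,\mathring z) - \hat J_\ka(D_i,D_j;z)| < \ve$ for $R,\de$ chosen in terms of $\ve$ and $c_0$, which is the first claim. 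The phaseless variant \eqref{eq:28} is handled identically: $f\mapsto|f|$ is $1$-Lipschitz on $L^2(\Ga)$ (by $\big||f|-|g|\big|\le|f-g|$ pointwise), so every error bound above survives taking moduli, and $\big\||f|\big\|=\|f\|$ keeps the denominators and the lower bounds intact.

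For the separation inequality I would observe that $\hat J_\ka(D_i,D_i;z)=1$ in both the phased and phaseless forms, while Cauchy--Schwarz gives $\hat J_\ka(D_i,D_j;z)\le 1$ with equality exactly when the two (moduli of the) fields are linearly dependent on $\Ga$; the added independence hypothesis excludes this, so $\hat J_\ka(D_i,D_j;z)<1$ strictly for $i\neq j$. Since $\ms{A}$ is finite, the gap $\ve_0 := \min_{i\neq j}\big(1-\hat J_\ka(D_i,D_j;z)\big)$ is positive and attained; applying the stability estimate with $\ve=\ve_0/3$ then forces $J_\ka(D_i,D_i;z,\mathring z) > 1-\ve_0/3 > 1-2\ve_0/3 > J_\ka(D_i,D_j;z,\mathring z)$ for all $i\neq j$, as required. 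The main obstacle I anticipate is not any single estimate but the interplay of the two regimes: on a fixed bounded aperture $\Ga$ the directions $\widehat{x-z}$ all collapse to the backscattering direction $-\hat z$ as $\|z\|\to\infty$, so $w_\ka^\infty(D_i,\hat z;\widehat{x-z})\to w_\ka^\infty(D_i,\hat z;-\hat z)$ becomes nearly constant in $x$ and the profiles $u_\ka(D_i,z;\cdot)$ become nearly parallel, letting the gap $\ve_0$ itself degrade with $\|z\|$. Establishing the final inequality in the stated large-$\|z\|$ regime therefore requires the $\mc{O}(\|z\|^{-1})$ stability error to be controlled against this shrinking gap; this is precisely where the order of quantifiers and the linear-independence assumption must be invoked with care, and the step I would scrutinize most.
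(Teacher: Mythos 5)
Your proposal follows essentially the same route as the paper's proof: the same two-term error decomposition (the $\mc{O}(\|z\|^{-1})$ estimate from Theorem~\ref{thm:point-plane} giving \eqref{eq:15}, plus continuity of the test function in the location parameter --- the paper invokes analyticity of $w_\ka^\infty$ where you use a mean-value bound --- giving \eqref{eq:16}), followed by the same stability of the normalized correlation through separate control of numerator and denominator, with the phaseless case handled identically via $\bigl||f|-|g|\bigr|\le|f-g|$; your Cauchy--Schwarz-plus-finite-dictionary gap argument for the second part is precisely what the paper's one-line remark (``can be readily shown by using the Cauchy--Schwarz inequality'') intends. Your closing caveat --- that the gap $\ve_0$ depends on $z$ and may degrade as $\|z\|\to\infty$ since all observation directions $\widehat{x-z}$ collapse to $-\hat z$ on the fixed aperture $\Ga$ --- is a fair observation about a quantifier looseness already present in the paper's statement and proof, not a deviation of your argument from theirs.
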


\begin{proof}
In the following, we only consider the case when $D$ is impenetrable and $J_\ka$ is given by \eqref{eq:41}. The case when $D$ is a penetrable medium scatterer and/or $J_\ka$ is given by \eqref{eq:28} can be proven in a completely similar manner.

Let $\ve>0$ be sufficiently small and fixed. By Theorem \ref{thm:point-plane} there exists $R>0$ such that if $\|z\| > R$,
  \begin{align}
    \label{eq:15}
    \left\| u_\ka(D_j,z;\cdot) - \hat{u}_\ka(D_j,z;\cdot) \right\|_{L^2(\Ga)} < \frac{\ve}{8} \left\| u_\ka(D_j,z;\cdot) \right\|_{L^2(\Ga)}, \ \forall j\in\{1,2,\ldots,N\}. 
  \end{align}
For given $z$ such that $\|z\|>R$ it follows from the analyticity of $w_\ka^\infty(D,\hat{\tilde z};\widehat{x-\tilde z})$ in $\tilde z$ that there exists $\de>0$ such that if $\|z-\mathring{z}\|<\de$, one has
\begin{align}
  \label{eq:16}
  \left\| \hat{u}_\ka(D_j,z;\cdot) - \hat{u}_\ka(D_j,\mathring{z};\cdot) \right\|_{L^2(\Ga)} < \frac{\ve}{8} \left\| u_\ka(D_j,z;\cdot) \right\|_{L^2(\Ga)}, \ \forall\,j\in\{1,2,\ldots,N\}. 
\end{align}
Combining \eqref{eq:15} and \eqref{eq:16} yields, for any given $z,\mathring{z} \in S$ such that $\|z\|>R$ and $\|z-\mathring{z}\|<\de$, there holds
\begin{align}
  \label{eq:17}
  \hat{u}_\ka(D_j,\mathring{z};x) = u_\ka(D_j,z;x) + v_\ka(D_j,x), \quad x \in \Ga,
\end{align}
with 
\[
\| v_\ka(D_j,\cdot) \|_{L^2(\Ga)} < \frac{\ve}{4} \| u_\ka(D_j,z;\cdot) \|_{L^2(\Ga)}\ \ \mbox{for all}\ \ D_j \in \ms{A}.
\]
Using the Cauchy-Schwarz inequality, one has
\begin{align}
  \label{eq:19}
  \left| \big\langle u_\ka(D_i,z; \cdot), \hat{u}_\ka(D_j,\mathring{z};\cdot) \big\rangle_{L^2(\Ga)} \right| =
  \left| \big\langle u_\ka(D_i,z; \cdot), u_\ka(D_j,z;\cdot) \big\rangle_{L^2(\Ga)} \right| + \de,
\end{align}
where $|\de| < \frac{\ve}{4} \| u_\ka(D_i,z;\cdot) \|_{L^2(\Ga)} \| u_\ka(D_j,z;\cdot) \|_{L^2(\Ga)}$. It follows from \eqref{eq:17} that
\begin{align}
  \label{eq:20}
  \left\| \hat{u}_\ka (D_j,\mathring{z};\cdot) \right\|_{L^2(\Ga)} =
  \left\| u_\ka (D_j,z;\cdot) \right\|_{L^2(\Ga)} ( 1 + \si)
\end{align}
with $|\si| < \frac{\ve}{4}$. Substituting \eqref{eq:19} and \eqref{eq:20} into \eqref{eq:41} yields
\begin{align*}
  J_\ka(D_i,D_j;z,\mathring{z}) - \hat{J}_\ka(D_i,D_j;z) &= \frac{-\si}{1+\si} \hat{J}_\ka(D_i,D_j;z) \\
  &+ \frac{\de}{\| u_\ka(D_i,z;\cdot) \|_{L^2(\Ga)} \| u_\ka(D_j,z;\cdot) \|_{L^2(\Ga)}(1+\si)}.
\end{align*}
Noting that $|\hat{J}_\ka(D_i,D_j;z)| \leq 1$, we obtain
\begin{align*}
  \left| J_\ka(D_i,D_j;z,\mathring{z}) - \hat{J}_\ka(D_i,D_j;z) \right| < 2 |\si| + \frac{\ve}{2} < \ve.
\end{align*}

The statement in the second part of the theorem can be readily shown by using the Cauchy-Schwarz inequality. The proof is complete. 
\end{proof}

By using Theorem~\ref{thm:regular}, the identification of the shape can be proceeded as follows. One first collects the measurement data $u_\ka^\Om|_\Ga$ (resp. $|u_\ka^\Om|_\Ga|$), and then compute the indicator functional \eqref{eq:41} (resp. \eqref{eq:28}), by taking $D_i=D$ and running the trial shape $D_j$ through all the dictionary shapes in $\mathscr{A}$. According to Theorem~\ref{thm:regular}, one readily sees that only when the trial shape $D_j=D$, the indicator functional achieves its maximum value (being approximately $1$). We note that in order to calculate the indicator functionals \eqref{eq:41} or \eqref{eq:28}, one needs the far-field data of all the dictionary scatterers $D_j\in\mathscr{A}$ corresponding to incident plane waves (cf. \eqref{eq:data}). It is remarked that those dictionary scattering data can be captured and saved beforehand in the gesture recognition device. 

\section{Numerical experiments}
In this section we describe the numerical implementation and conduct numerical experiments to test the effectiveness and efficiency of the recognition method.

Without loss of generality, we assume that the target scatterer is given by $\Omega:=D_i+z_0$ for some $D_i\in\mathscr{A}$ and $z_0\in \RR^3_+$. Let the measurement surface $\Ga$ be a square in the $x^2x^3$-plane and centred at the origin. The measurement data $v(\ka_1,D_i,z_0;x)$ and $v(\ka_2,D_i,z_0;x)$ are taken at a uniformly distributed grid points on $\Ga$. The $L^2$-inner product and the $L^2$-norm on $\Ga$ in the indicator functionals \eqref{eq:34} and \eqref{eq:21} are approximately computed using the composite Trapezoidal rule on the grid.

Using polar coordinates $(\theta,\vp)\in \mathbb{S}^2$, we let $(\te^{\rm in}_j, \vp^{\rm in}_k)$ be a uniform mesh of the incident angles from $\BS^2_+=[0,\pi] \times [-\pi/2,\pi/2]$, and $(\te_m, \vp_n)$ be a uniform mesh of observation angles from $\BS^2_-=[0,\pi] \times [\pi/2,3\pi/2]$. We compute and save $w^\infty(\ka_2,D_i,\hat{z}_{jk};\hat{x}_{mn})$ for each $D_i \in \ms{A}$ and
\begin{align*}
  \hat{z}_{jk} &= \left[ \sin(\te^{\rm in}_j)\cos(\vp^{\rm in}_k), \sin(\te^{\rm in}_j)\sin(\vp^{\rm in}_k), \cos(\te^{\rm in}_j) \right], \\[3pt]
\hat{x}_{mn} &= \left[ \sin(\te_m)\cos(\vp_n), \sin(\te_m)\sin(\vp_n), \cos(\te_m) \right].
\end{align*}
For fixed $D \in \ms{A}$, $\hat{z} \in \BS^2_+$ and $\hat{x}_{mn}$, we let $w^\infty_z(\ka,D;\hat{x}_{mn})$ be the bilinear interpolation of $w^\infty(\ka,D_i,\hat{z}_{jk};\hat{x}_{mn})$ at $\hat{z}$. For fixed $x \in \Ga$, let $w^\infty_{zx}(\ka,D)$ be the bilinear interpolation of $w^\infty_z(\ka,D;\hat{x}_{mn})$ at $\widehat{x-z}$. Then we have the approximation $w^\infty(\ka,D,\hat{z};\widehat{x-z}) \approx w^\infty_{zx}(\ka,D)$ and the function $\hat{u}$ in \eqref{eq:41} can be computed efficiently from precomputed data saved in the gesture recognition device.

For the numerical experiments, the admissible class/dictionary consists of six scatterers as shown in Figure \ref{fig:admissible-class}. Each scatterer is composed of four unit cubes.
\begin{figure}[t]
  \centering
  \begin{subfigure}[b]{0.2\textwidth}
    \includegraphics[width=\textwidth]{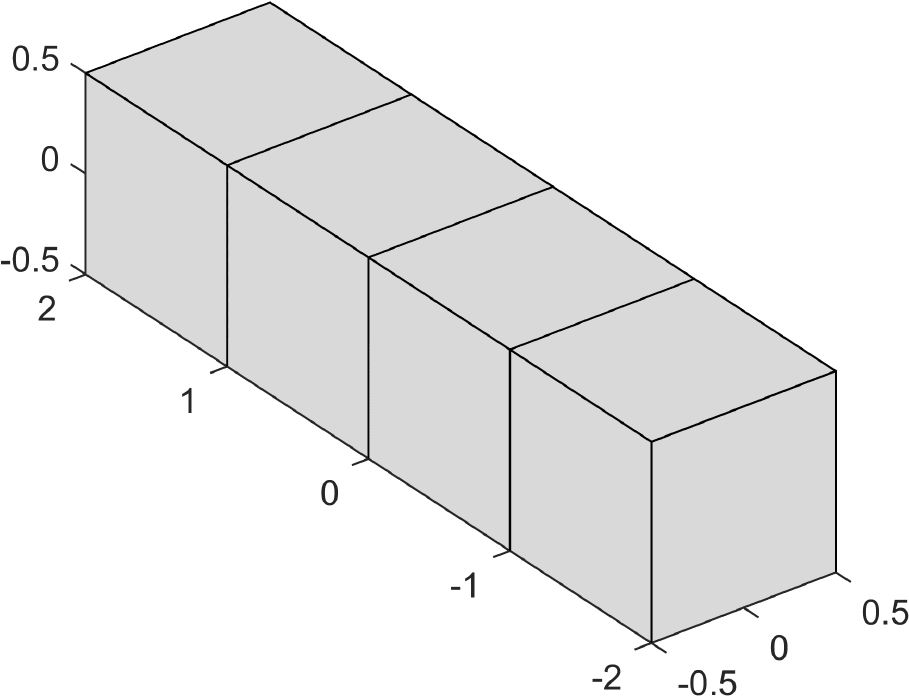}
    \caption{$D_1$}
  \end{subfigure}
\hspace{1cm}
  \begin{subfigure}[b]{0.2\textwidth}
    \includegraphics[width=\textwidth]{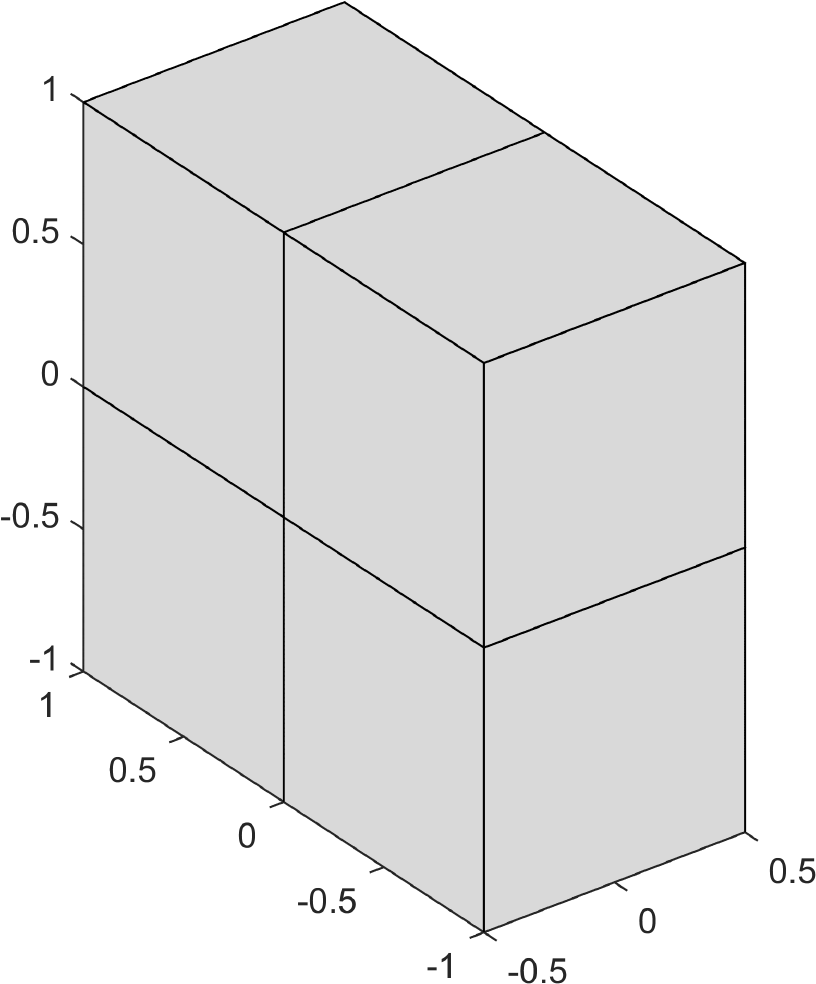}
    \caption{$D_2$}
  \end{subfigure}
\hspace{1cm}
  \begin{subfigure}[b]{0.2\textwidth}
    \includegraphics[width=\textwidth]{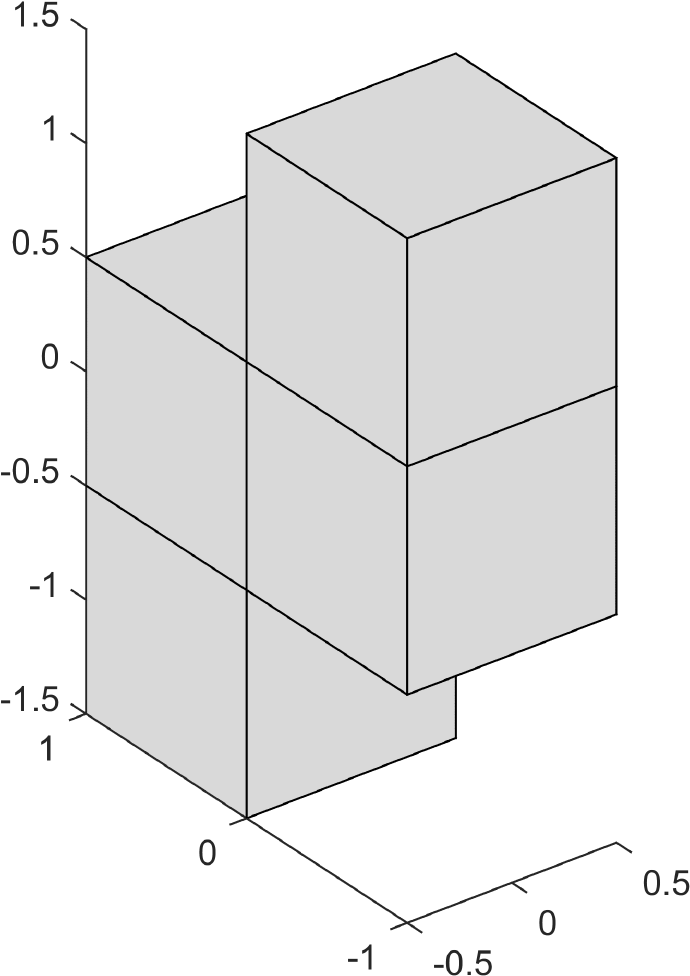}
    \caption{$D_3$}
  \end{subfigure}
\\[20pt]
  \begin{subfigure}[b]{0.2\textwidth}
    \includegraphics[width=\textwidth]{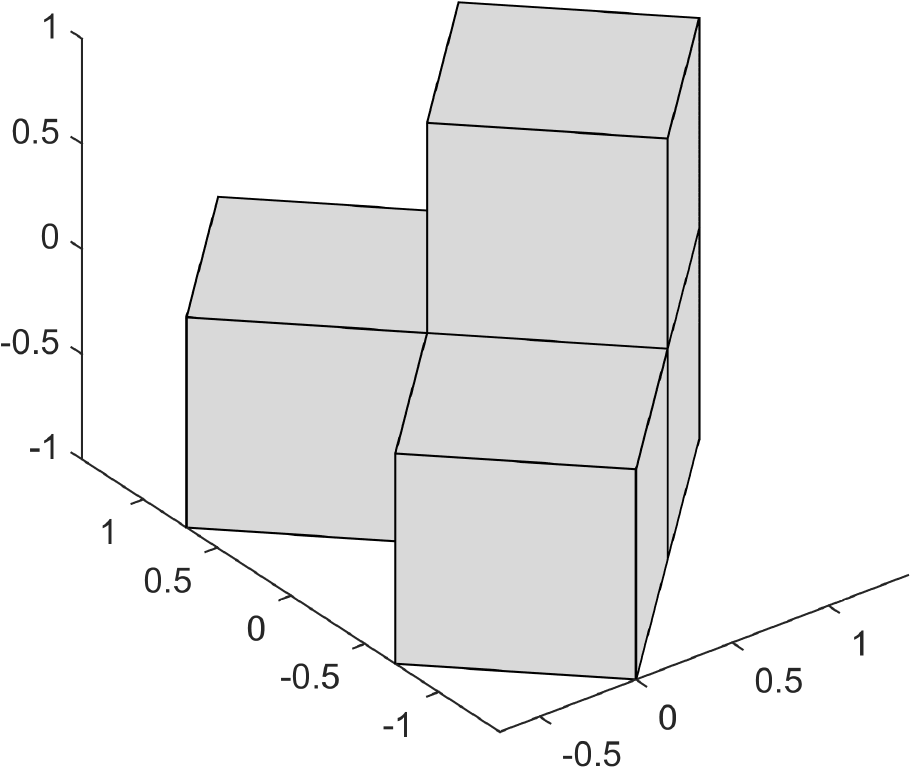}
    \caption{$D_4$}
  \end{subfigure}
\hspace{1cm}
  \begin{subfigure}[b]{0.2\textwidth}
    \includegraphics[width=\textwidth]{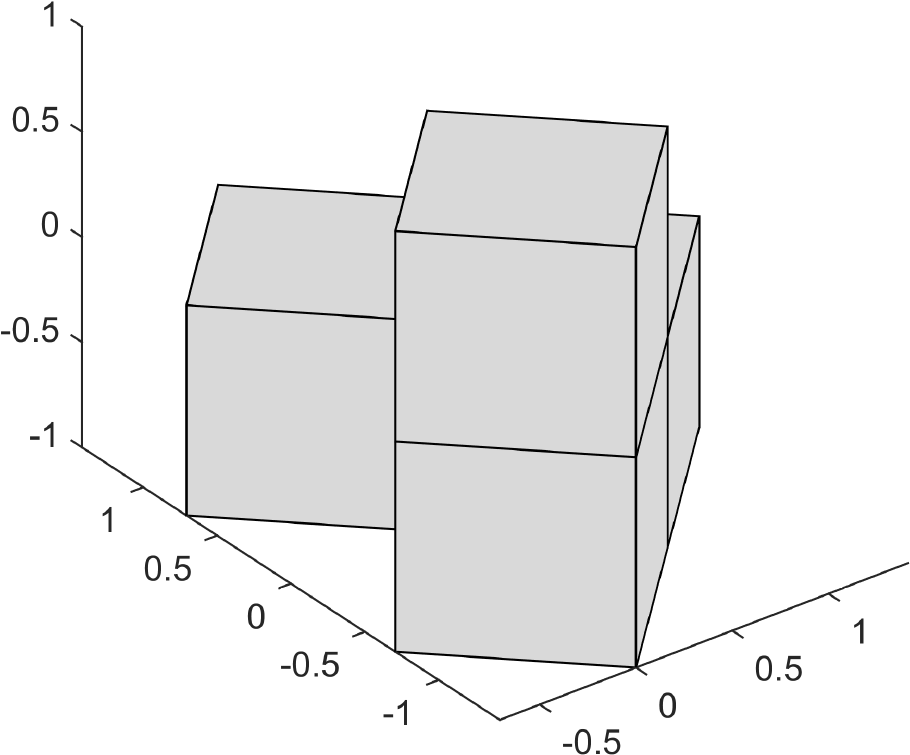}
    \caption{$D_5$}
  \end{subfigure}
\hspace{1cm}
  \begin{subfigure}[b]{0.2\textwidth}
    \includegraphics[width=\textwidth]{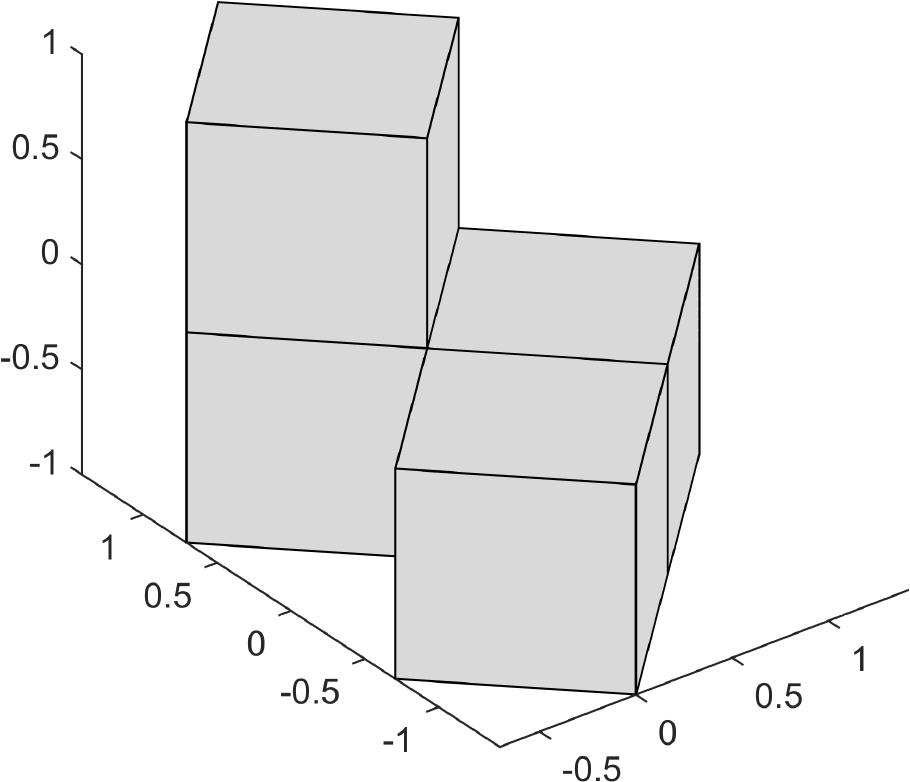}
    \caption{$D_6$}
  \end{subfigure}
  \caption{The dictionary consists of six scatterers. Each scatterer is comprised with four unit cubes.}
  \label{fig:admissible-class}
\end{figure}

In the numerical experiments in what follows, we let $z_0=[50,0,0]$ be fixed. The wavelength for determining the location is set to be $\la_1:=2\pi/\ka_1=100$ and the wavelength for the shape identification is set to be $\la_2=:=2\pi/\ka_2=1$. The measurement surface $\Ga$ is a square in the $x^2x^3$-plane, centred at the origin and has a side length $20$. The measurement data is taken at a $32 \times 32$ uniform mesh on $\Ga$. The far-field data are computed and saved at a $180 \times 180$ uniform mesh for the incident angles and $180 \times 180$ uniform mesh for the observation angles. The sampling region for determining the location is set as $S=[0,100] \times [-100,100] \times [-100,100]$. The optimization of the indicator functions \eqref{eq:34} or \eqref{eq:24} is performed using the Matlab function \texttt{fmincon} with the \texttt{sqp} algorithm, without using the gradient information, and with an initial guess at $[10,0,0]$.
\begin{table}[t]
  \centering
  \begin{tabular}[c]{|c|c|c|c|c|c|c|}
    \hline
    $ $ & $D_1$ & $D_2$ & $D_3$ & $D_4$ & $D_5$ & $D_6$ \\
    \hline
    $\mathring{z}_0^1$ & 50.0279 & 50.0048 & 50.0117 & 50.2805 & 50.1344 & 50.1301 \\
    \hline
    $\mathring{z}_0^2$ & -0.0130 & -0.0011 & 0.0020 & -0.0029 & -0.1435 & 0.1449 \\
    \hline
    $\mathring{z}_0^3$ & -0.0010 & -0.0018 & -0.0025 & -0.2181 & -0.1992 & -0.1944 \\
    \hline    
    $|\mathring{z}_0 - z_0|$ & 0.0308 & 0.0053 & 0.0121 & 0.3553 & 0.2799 & 0.2752 \\
    \hline
  \end{tabular}
  \caption{Approximate location $\mathring{z}_0$ for each scatterer in the dictionary using noise-free measurement data with phase. All $D_i \in \ms{A}$ are impenetrable soft scatterers.}
  \label{tab:location_phase}
\end{table}

\subsection{Impenetrable scatterers}

We first consider the case when all $D_i \in \ms{A}$ are impenetrable soft scatterers. In the first step of the identification, we determine the approximate location of the scatterer as the maximizer of the indicator function $I_\ka$ (cf. \eqref{eq:34} and \eqref{eq:24}). The coordinates and the distance from the exact location are presented in Table \ref{tab:location_phase} for each scatterer in the dictionary.
\begin{table}[t]
  \centering
  \begin{tabular}[c]{|c|c|c|c|c|c|c|}
    \hline
    $ $ & $D_1$ & $D_2$ & $D_3$ & $D_4$ & $D_5$ & $D_6$ \\
    \hline
    $D_1$ & \bf{0.9986} & 0.9694 & 0.9574 & 0.7930 & 0.9217 & 0.9231 \\
    \hline
    $D_2$ & 0.9690 & \bf{1.0000} & 0.9914 & 0.8271 & 0.9499 & 0.9507 \\
    \hline
    $D_3$ & 0.9571 & 0.9912 & {\bf 0.9998} & 0.8198 & 0.9443 & 0.9383 \\
    \hline
    $D_4$ & 0.8465 & 0.8899 & 0.8811 & \bf{0.9859} & 0.8984 & 0.8992 \\
    \hline
    $D_5$ & 0.9363 & 0.9672 & 0.9603 & 0.8139 & \bf{0.9872} & 0.9680 \\
    \hline
    $D_6$ & 0.9366 & 0.9666 & 0.9511 & 0.8161 & 0.9685 & \bf{0.9876} \\
    \hline
  \end{tabular}
  \caption{The $j$-th row and $i$-th column of the number array gives the value of $J_\ka(D_i,D_j,z_0;\mathring{z}_0)$ using noise free measurement data with phase. All $D_i \in \ms{A}$ are impenetrable scatterers.}
  \label{tab:identification_phase}
\end{table}

Next we compute the value of the indicator function $J_\ka$ (cf. \eqref{eq:41} and \eqref{eq:28}) at the approximate location found in table \ref{tab:location_phase}. The results are listed in Table \ref{tab:identification_phase}, where the value of $J_\ka(D_i,D_j,z_0;\mathring{z}_0)$ is listed in the $j$-th row and $i$-th column of the array. The maximum value in each row is marked in bold face. Clearly the maximum is obtained at $i=j$ and the scatterer is identified in each case. The whole process of a recognition takes less than one second in a personal computer (excluding the time needed to precompute the far-field pattern and save the data in the disk).

Alternatively, we can use phaseless measurement data in both the first and the second step of the identification.
\begin{table}[t]
  \centering
  \begin{tabular}[c]{|c|c|c|c|c|c|c|}
    \hline
    $ $ & $D_1$ & $D_2$ & $D_3$ & $D_4$ & $D_5$ & $D_6$ \\
    \hline
    $\mathring{z}_0^1$ & 50.0144 & 50.0351 & 50.0265 & 50.3865 & 50.2216 & 50.2182 \\
    \hline
    $\mathring{z}_0^2$ & -0.0233 & -0.0021 & 0.0009 & -0.0039 & -0.1311 & 0.1337 \\
    \hline
    $\mathring{z}_0^3$ & -0.0035 & -0.0039 & -0.0001 & -0.1402 & -0.2281 & -0.2199 \\
    \hline
    $\|\mathring{z}_0 - z_0\|$ & 0.0276 & 0.0353 & 0.0265 & 0.4112 & 0.3440 & 0.3374 \\
    \hline
  \end{tabular}
  \caption{Approximate location $\mathring{z}_0=(\mathring{z}_0^1,\mathring{z}_0^2,\mathring{z}_0^3)$ for each scatterer in the dictionary using noise-free phaseless measurement data. All $D_i \in \ms{A}$ are impenetrable scatterers.}
  \label{tab:location_phaseless}
\end{table}
The locations found using the phaseless data are listed in Table \ref{tab:location_phaseless}. We observe that the results are equally good compared to the results found with the full data. 
\begin{table}[t]
  \centering
  \begin{tabular}[c]{|c|c|c|c|c|c|c|}
    \hline
    $ $ & $D_1$ & $D_2$ & $D_3$ & $D_4$ & $D_5$ & $D_6$ \\
    \hline
    $D_1$ & {\bf 0.9998} & 0.9731 & 0.9614 & 0.9403 & 0.9502 & 0.9509 \\
    \hline
    $D_2$ & 0.9694 & {\bf 1.0000} & 0.9915 & 0.9727 & 0.9810 & 0.9815 \\
    \hline
    $D_3$ & 0.9581 & 0.9918 & {\bf 1.0000} & 0.9579 & 0.9734 & 0.9691 \\
    \hline
    $D_4$ & 0.9320 & 0.9718 & 0.9578 & \bf{0.9998} & 0.9298 & 0.9306 \\
    \hline
    $D_5$ & 0.9454 & 0.9786 & 0.9688 & 0.9340 & {\bf 0.9999} & 0.9830 \\
    \hline
    $D_6$ & 0.9463 & 0.9786 & 0.9650 & 0.9339 & 0.9830 & {\bf 0.9999} \\
    \hline
  \end{tabular}
  \caption{The $j$-th row and $i$-th column of the number array gives the value of $J_\ka(D_i,D_j,z_0;\mathring{z}_0)$ using noise free phaseless measurement data. All $D_i \in \ms{A}$ are impenetrable soft scatterers.}
  \label{tab:identification_phaseless}
\end{table}
The identification results using phaseless data are presented in Table \ref{tab:identification_phaseless}. Again the results are similar to those found using measurement data with phases.

Since the computation of the indicator functionals involves only algebraic operations on the measured data, it is expected that the method is robust with respect to measurement noise. In fact, the sensitivity of the indicator functionals tends to zero as the grid size of the measurement surface tends to zero if the noise is assumed to be white.
\begin{table}[t]
  \centering
  \begin{tabular}[c]{|c|c|c|c|c|c|c|}
    \hline
    $ $ & $D_1$ & $D_2$ & $D_3$ & $D_4$ & $D_5$ & $D_6$ \\
    \hline
    $D_1$ & {\bf 0.9977} & 0.9737 & 0.9602 & 0.9393 & 0.9497 & 0.9491 \\
    \hline
    $D_2$ & 0.9592 & {\bf 0.9982} & 0.9911 & 0.9724 & 0.9796 & 0.9852 \\
    \hline
    $D_3$ & 0.9504 & 0.9915 & {\bf 0.9989} & 0.9564 & 0.9749 & 0.9703 \\
    \hline
    $D_4$ & 0.9502 & 0.9822 & 0.9754 & {\bf 0.9967} & 0.9438 & 0.9457 \\
    \hline
    $D_5$ & 0.9151 & 0.9750 & 0.9597 & 0.9318 & {\bf 0.9993} & 0.9796 \\
    \hline
    $D_6$ & 0.9129 & 0.9780 & 0.9665 & 0.9298 & 0.9787 & {\bf 0.9993} \\
    \hline
  \end{tabular}
  \caption{The $j$-th row and $i$-th column of the number array gives the value of $J_\ka(D_i,D_j,z_0;\mathring{z}_0)$ using phaseless measurement data plus $5\%$ relative noise. All $D_i \in \ms{A}$ are impenetrable soft scatterers}
  \label{tab:identification_phaseless_noise}
\end{table}
For the $32 \times 32$ mesh of the measurement surface, we are still able to recoginize all the scatterers with the phaseless data plus a $5\%$ relative white noise with uniform distribution in $[-1,1]$. The robustness with respect to the measurement noise increases as the number of measurement point increases. The value of the indicator function $J_\ka$ for one run of the algorithm is shown in Table \ref{tab:identification_phaseless_noise}.

Finally, we consider the case when all or some of $D \in \ms{A}$ are medium scatterers. Table \ref{tab:identification_phaseless_noise_medium} shows the results when all $D \in \ms{A}$ are medium scatterers with $n_D = 4.0$ and Table \ref{tab:identification_phaseless_noise_mixed} shows the results when $D_1,D_2,D_3$ are impenetrable soft scatterers and $D_4,D_5,D_6$ are medium scatterers with $n_D=4.0$. All of the results are obtained with phaseless data plus $5\%$ relative random noise.
\begin{table}[t]
  \centering
  \begin{tabular}[c]{|c|c|c|c|c|c|c|}
    \hline
    $ $ & $D_1$ & $D_2$ & $D_3$ & $D_4$ & $D_5$ & $D_6$ \\
    \hline
    $D_1$ & {\bf 0.9797} & 0.9681 & 0.9610 & 0.9364 & 0.9121 & 0.9421 \\
    \hline
    $D_2$ & 0.9645 & {\bf 0.9991} & 0.9905 & 0.9765 & 0.9474 & 0.9576 \\
    \hline
    $D_3$ & 0.9514 & 0.9918 & {\bf 0.9994} & 0.9641 & 0.9222 & 0.9506 \\
    \hline
    $D_4$ & 0.9089 & 0.9770 & 0.9660 & {\bf 0.9989} & 0.9311 & 0.9309 \\
    \hline
    $D_5$ & 0.9339 & 0.9542 & 0.9164 & 0.9431 & {\bf 0.9987} & 0.8708 \\
    \hline
    $D_6$ & 0.8689 & 0.9370 & 0.9339 & 0.9387 & 0.8527 & {\bf 0.9964} \\
    \hline
  \end{tabular}
  \caption{The $j$-th row and $i$-th column of the number array gives the value of $J_\ka(D_i,D_j,z_0;\mathring{z}_0)$ using phaseless measurement data plus $5\%$ relative noise. All $D_i \in \ms{A}$ are medium scatterers with $n_D = 4.0$}
  \label{tab:identification_phaseless_noise_medium}
\end{table}

\begin{table}[t]
  \centering
  \begin{tabular}[c]{|c|c|c|c|c|c|c|}
    \hline
    $ $ & $D_1$ & $D_2$ & $D_3$ & $D_4$ & $D_5$ & $D_6$ \\
    \hline
    $D_1$ & {\bf 0.9978} & 0.9799 & 0.9820 & 0.9754 & 0.9548 & 0.9561 \\
    \hline
    $D_2$ & 0.9695 & {\bf 0.9967} & 0.9926 & 0.9139 & 0.8916 & 0.8856 \\
    \hline
    $D_3$ & 0.9807 & {\bf 0.9931} & 0.9981 & 0.9253 & 0.8952 & 0.8945 \\
    \hline
    $D_4$ & 0.9614 & 0.9176 & 0.9279 & {\bf 0.9994} & 0.9334 & 0.9336 \\
    \hline
    $D_5$ & 0.9338 & 0.8472 & 0.8768 & 0.9401 & {\bf 0.9978} & 0.8579 \\
    \hline
    $D_6$ & 0.9563 & 0.8808 & 0.8839 & 0.9365 & 0.8675 & {\bf 0.9988} \\
    \hline
  \end{tabular}
  \caption{The $j$-th row and $i$-th column of the number array gives the value of $J_\ka(D_i,D_j,z_0;\mathring{z}_0)$ using phaseless measurement data plus $5\%$ relative noise. Here $D_1,D_2,D_3$ are impenetrable scatterers and $D_4,D_5,D_6$ are medium scatterers with $n_D=4.0$.}
  \label{tab:identification_phaseless_noise_mixed}
\end{table}

\section*{Acknowledgement}

The work was supported by the Startup and FRG grants from Hong Kong Baptist University, Hong Kong RGC General Research Funds, 12302415 and 405513, and the NSF grant of China, No. 11371115.

\clearpage

\end{document}